\documentclass[final,smallcondensed,numbook]{svjour3}

\usepackage{graphicx} 
\usepackage{amssymb,amsmath,amsbsy,mathrsfs,amsfonts}
\usepackage[english]{babel}							
\usepackage[utf8]{inputenc}
\usepackage{multirow} 
\usepackage[parfill]{parskip} 
\usepackage{cancel,mathtools,empheq,latexsym}
\usepackage{booktabs,multicol,multirow,tabularx,array} 
\usepackage{bm}
\usepackage{mathrsfs}
\usepackage{comment}
\usepackage[font={small,it}]{caption} %
\usepackage{hyperref}
\hypersetup{
    colorlinks=true,       
    linkcolor=red,          
    citecolor=blue,        
    filecolor=magenta,      
    urlcolor=cyan           
}

\setlength{\textwidth}{\dimexpr\pdfpagewidth-2.1in}
\setlength{\textheight}{\dimexpr\pdfpageheight-2in}


\newcommand{\bs}[1]{\boldsymbol{#1}}
\newcommand{\vertiii}[1]{{\left\vert\kern-0.25ex\left\vert\kern-0.25ex\left\vert #1 
    \right\vert\kern-0.25ex\right\vert\kern-0.25ex\right\vert}}    
\newcommand{\uline}{\underline}
\newcommand{\dis}{\displaystyle}
\newcommand{\midiv}{\nabla\hspace{-0.05cm}\cdot }

\newcommand{\C}{\mathbb{C}}
\newcommand{\nablat}{\nabla^{\top}}
\newcommand{\re}[1]{\mathrm{Re}\!\left(#1\right)}

\newcommand{\bos}[1]{\boldsymbol{#1}}
\newcommand{\ten}[1]{\bos{#1}}
\newcommand{\mcal}[1]{\mathcal{#1}}
\newcommand{\ul}[1]{\underline{#1}}
\newcommand{\conj}[1]{\overline{#1}}
\newcommand{\lef}{\left}
\newcommand{\rig}{\right}
\newcommand{\ra}[2]{\lef. #1\rig|_{#2}} 

\newcommand{\hooke}{\mathbf{C}}
\newcommand{\hookei}{\mathbf{C}^{-1}}
\newcommand{\strain}[1]{\ten{\varepsilon}\lef(#1\rig)}
\newcommand{\rA}{\rho_f}
\newcommand{\rE}{\rho_E}
\newcommand{\vinc}{v^\text{inc}}
\newcommand{\vtot}{v^\text{tot}}
\newcommand{\vh}{v_h}
\newcommand{\vhg}{\widehat{v}_h}

\newcommand{\uv}{\bos{u}}
\newcommand{\uvh}{\uv_h}
\newcommand{\uvhg}{\widehat{\uv}_h}

\newcommand{\si}{\ten{\sigma}}
\newcommand{\sih}{\si_h}
\newcommand{\sihg}{\widehat{\si}_h}

\newcommand{\q}{\bos{q}}
\newcommand{\qh}{\q_h}
\newcommand{\qhg}{\widehat{\q}_h}

\newcommand{\g}{\ten{\gamma}}
\newcommand{\gh}{\g_h}

\newcommand{\etat}{\ten{\eta}}

\newcommand{\taut}{\ten{\tau}}

\newcommand{\tv}{\bos{t}}

\newcommand{\muv}{\bos{\mu}}
\newcommand{\rv}{\bos{r}}

\newcommand{\efe}{\bos{f}}

\newcommand{\te}{\mcal{T}_E}
\newcommand{\dte}{\partial\te}
\newcommand{\ta}{\mcal{T}_A}
\newcommand{\dta}{\partial\ta}
\newcommand{\ee}{\mcal{E}_E}
\newcommand{\ea}{\mcal{E}_A}
\newcommand{\oma}{\Omega_A}
\newcommand{\ome}{\Omega_E}
\newcommand{\ga}{\Gamma}
\newcommand{\nor}{\bos{n}}
\newcommand{\nore}{\bos{n}_E}
\newcommand{\nora}{\bos{n}_A}
\newcommand{\gaa}{\Gamma_A}
\newcommand{\gaadir}{\gaa^D}
\newcommand{\gaaneu}{\gaa^N}

\newcommand{\pk}[2]{\mcal{P}_{#1}\!\lef(#2\rig)}
\newcommand{\pkv}[2]{\bos{\mcal{P}}_{#1}\!\lef(#2\rig)}
\newcommand{\pkt}[2]{\underline{\ten{\mcal{P}}}_{#1}\!\lef(#2\rig)}
\newcommand{\pkttil}[2]{\ul{\ten{\widetilde{\mcal{P}}}}_{#1}\!\lef(#2\rig)}
\newcommand{\aspace}[1]{\uline{\bos{A}}\!\lef(#1\rig)}
\newcommand{\atilspace}[1]{\uline{\bos{\widetilde{A}}}\!\lef(#1\rig)}
\newcommand{\asspace}[1]{\uline{\bos{AS}}\!\lef(#1\rig)}
\newcommand{\bspace}[1]{\uline{\bos{B}}\!\lef(#1\rig)}
\newcommand{\vtspace}[1]{\uline{\bos{V}}\!\lef(#1\rig)}
\newcommand{\ld}[1]{L^2\!\lef(#1\rig)}
\newcommand{\ldv}[1]{\bos{L}^2\!\lef(#1\rig)}
\newcommand{\ldt}[1]{\uline{\bos{L}}^2\!\lef(#1\rig)}
\newcommand{\vhspace}{\uline{\bos{V}}_h}
\newcommand{\whvespace}{\bos{W}_h^E}
\newcommand{\whvaspace}{\bos{W}_h^A}
\newcommand{\ahspace}{\uline{\bos{A}}_h}
\newcommand{\ahzspace}{\uline{\bos{A}}^0_h}
\newcommand{\ahcspace}{\uline{\bos{A}}^c_h}
\newcommand{\mhvspace}{\bos{M}_h}
\newcommand{\whspace}{W_h}
\newcommand{\mhspace}{M_h}

\newcommand{\ipta}[2]{( #1,#2 )_{\ta}}
\newcommand{\ipte}[2]{( #1,#2 )_{\te}}
\newcommand{\ipba}[2]{\langle #1,#2 \rangle_{\dta}}
\newcommand{\ipbe}[2]{\langle #1,#2 \rangle_{\dte}}
\newcommand{\ipfe}[2]{\langle #1,#2 \rangle_{\dte\setminus\ga}}
\newcommand{\ipk}[2]{( #1,#2 )_{K}}

\newcommand{\ipg}[2]{\langle #1,#2 \rangle_{\ga}}
\newcommand{\nte}[1]{\lef\| #1 \rig\|_{\te}}
\newcommand{\ntehookei}[1]{\lef\| #1 \rig\|_{\te,\hookei}}
\newcommand{\nta}[1]{\lef\| #1 \rig\|_{\ta}}
\newcommand{\nbta}[1]{\lef\| #1 \rig\|_{\dta}}
\newcommand{\nbte}[1]{\lef\| #1 \rig\|_{\dte}}

\newcommand{\esi}{\ten{e_{\bos{\sigma}}}}
\newcommand{\eu}{\bos{e_{\bos{u}}}}
\newcommand{\eg}{\ten{e_{\bos{\gamma}}}}
\newcommand{\eug}{\bos{e_{\bos{\widehat{u}}}}}
\newcommand{\eq}{\bos{e_{\bos{q}}}}
\newcommand{\ev}{e_v}
\newcommand{\evg}{e_{\widehat{v}}}
\newcommand{\esig}{\ten{e_{\bos{\widehat{\sigma}}}}}
\newcommand{\eqg}{\bos{e_{\bos{\widehat{q}}}}}
\newcommand{\piv}{\ten{\underline{\Pi}_E}}
\newcommand{\piwe}{\bos{\Pi_E}}
\newcommand{\piwa}{\bos{\Pi_A}}
\newcommand{\pia}{\ten{\underline{\Pi}}}
\newcommand{\piw}{\Pi_A}
\newcommand{\pemv}{\bos{P_M}}
\newcommand{\pem}{P_M}
\newcommand{\delsi}{\ten{\delta_{\sigma}}}
\newcommand{\delg}{\ten{\delta_{\gamma}}}
\newcommand{\delu}{\bos{\delta_{u}}}
\newcommand{\delq}{\bos{\delta_{q}}}
\newcommand{\delv}{\delta_{v}}

\title{A coupled HDG discretization for the interaction between acoustic and elastic waves}
\titlerunning{Coupled HDG for acoustic/elastic waves}

\author{Fernando Artaza-Covarrubias
    \thanks{Partially funded by ANID-Chile through the grant Fondecyt Regular 1240183.} 
    \and Tonatiuh S\'anchez-Vizuet
    \thanks{Partially funded by the U. S. National Science Foundation through the grant NSF-DMS-2137305.} 
    \and Manuel Solano
    \thanks{Partially funded by ANID-Chile through the grants  Fondecyt Regular 1240183 and Basal FB210005.} 
    }
\institute{Fernando Artaza-Covarrubias
            \at Department of Mathematical Engineering, Faculty of Mathematical and Physical Sciences. Universidad de Concepci\'on, Chile
            \email{fartaza2019@udec.cl} \and
           Tonatiuh S\'anchez-Vizuet
            \at Department of Mathematics, The University of Arizona, USA
            \email{tonatiuh@arizona.edu} \and
           Manuel Solano
            \at  Department of Mathematical Engineering, Faculty of Mathematical and Physical Sciences and Center for Research in Mathematical Engineering CI$^2$MA  Universidad de Concepci\'on, Chile
            \email{msolano@ing-mat.udec.cl}
    }

\authorrunning{Artaza-Covarrubias, S\'anchez-Vizuet, Solano}

\date{}

\begin{document}

\maketitle

\begin{abstract}
We propose and analyze an HDG scheme for the Laplace-domain interaction between a transient acoustic wave and a bounded elastic solid embedded in an unbounded fluid medium. Two mixed variables (the stress tensor and the velocity of the acoustic wave) are included while the symmetry of the stress tensor is imposed weakly by considering the antisymmetric part of the strain tensor (the spin or vorticity tensor) as an additional unknown. Convergence of the method is demonstrated and theoretical rates are obtained; numerical results suggesting optimal order of convergence and superconvergence of the traces are presented.

\keywords{Acoustic waves \and elastic waves \and Hybridizable Discontinuous Galerkin \and Coupled HDG  \and wave-structure interaction.}
\subclass{74J05 \and 65M60 \and 65M15 \and 65M12.}

\end{abstract}

\section{Introduction}

We are interested in the computational simulation of the interaction between a transient acoustic wave and a homogeneous, isotropic and linearly elastic solid. The physical setting of the problem is as follows. An incident acoustic wave, represented by its scalar velocity potential $\vinc$, propagates at constant speed $c$ in a homogeneous, isotropic and irrotational fluid with density $\rA$ filling a region $\oma$ and impinges upon an elastic body of density $\rE$ contained in a bounded region $\ome$ with Lipschitz boundary $\ga$ and exterior unit normal vector $\nore$. Part of the energy and momentum carried by the acoustic wave is transferred to the elastic solid, exciting an internal elastic wave $\uv$, while the remaining momentum and energy are carried by an acoustic wave $v$ that is scattered off the surface $\ga$ of the elastic body. The physical setting is represented graphically in the left panel of \autoref{fig:unbounded}.

Due to the linearity of the problem, the total acoustic wave $\vtot = \vinc + v$ is the superposition of the known incident field $\vinc$ and the unknown scattered field $v$. The unknowns are thus the scattered acoustic field $v$ and the excited elastic displacement field $\uv$ that satisfy the following system of time-dependent partial differential equations \cite{tesis_tonatiuh}:
\begin{align*}
    -\midiv\lef(2\mu\strain{\uv}+\lambda\midiv\uv\bos{I}\rig)+\rE\ddot{{\uv}} &= \efe &\quad&\text{in }\ome,\\
    -\Delta v+c^{-2}\ddot{v}&=f &\quad&\text{in }\oma,\\
    \nabla  \vtot\cdot\nore+\dot{\uv}\cdot\nore&=0 &\quad&\text{on }\ga,\\
    \rA\dot{v}^{\text{tot}}\nore+\lef(2\mu\strain{\uv}+\lambda\midiv\uv\bos{I}\rig)\nore &= \bos{0} &\quad&\text{on }\ga,
\end{align*}
including suitable initial and radiation conditions, where the upper dot represents differentiation with respect to time, $\strain{\uv}:=\frac{1}{2}(\nabla\uv+\nablat\uv)$ is the strain tensor, $\bos{I}$ is the identity tensor, $\efe$ and $f$ are square integrable source terms for every time, and the Lam\'e constants, $\mu$ (shear modulus) and $\lambda$ (Lam\'e's first parameter), encode the material properties of the solid. The symmetric tensor
\[
\boldsymbol\sigma := 2\mu\strain{\uv}+\lambda\midiv\uv\bos{I}
\]
is known as the Cauchy stress tensor and can be represented compactly as $\boldsymbol\sigma = \mathbf C\boldsymbol\varepsilon(\boldsymbol u)$, where Hooke's elasticity tensor $\hooke$ is defined by its action on an arbitrary square matrix $\ten{M}$ as
\[
\hooke\ten{M}:= 2\mu \ten{M} + \lambda {\text{tr}}(\ten{M})\ten{I} 
\quad \text{ and } \quad
\hookei(\ten{M}) :=\frac{1}{2\mu}\ten{M} - \frac{\lambda}{2\mu(n\lambda + 2\mu)}{\text{tr}}(\ten{M})\ten{I},
\]
where $\text{tr}(\ten{M}):=\sum_{i=1}^{n} M_{ii}$ is the matrix trace operator. We will follow the approach from \cite{Ke}, where the symmetry of the stress tensor $\boldsymbol\sigma$ is imposed weakly by introducing the spin (or vorticity) tensor
\[
\g(\uv) := (\nabla \uv - \nablat \uv)/2
\]
as an additional unknown. 

\begin{figure}
    \centering
    \includegraphics[width=0.33\linewidth]{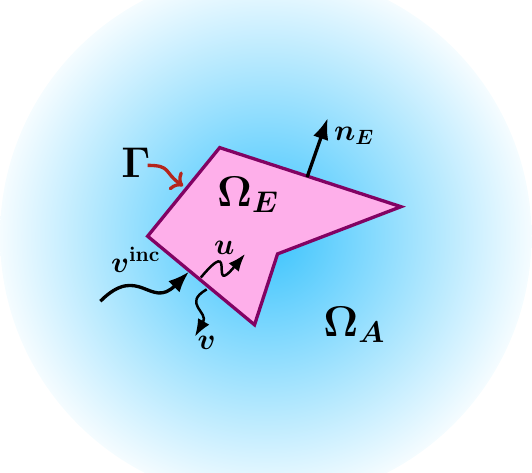} \qquad \qquad \qquad
    \includegraphics[width=0.33\linewidth]{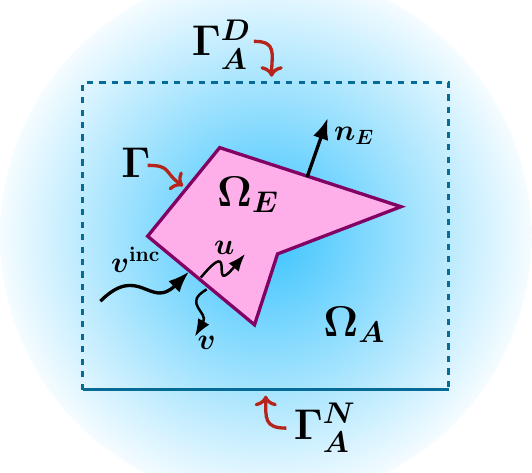} 
    \caption{Schematic representation of the problem geometry. Left: The elastic domain $\Omega_E$ is bounded; its Lipschitz boundary is denoted as $\Gamma$. The domain $\Omega_A$ where the acoustic waves propagate is unbounded. Right: An artificial boundary $\Gamma_A$ enclosing the elastic domain is introduced. The artificial boundary is split onto two disjoint components  $\Gamma_A^D$ and $\Gamma_A^N$ where Dirichlet and Neumann boundary conditions are imposed respectively. }
    \label{fig:unbounded}
\end{figure}

When viewed in full generality, the acoustic propagation region $\oma$ is in fact unbounded and given by $\oma:=\mathbb R^n\setminus\overline{\ome}$. This fact introduces further computational challenges that are often addressed either through an integral equation representation of the acoustic wave \cite{BrSaSa:2016,HsSa:2021,HsSaSa:2016,SV:2024}, the introduction of a perfectly matched layer \cite{LiTa:1997}, the use of absorbing boundary conditions \cite{EnMa:1977,HaTr:1988,Hi:1987,HaTr:1986} or the representation of the acoustic field through a moment expansion \cite{AlAn:2024}. 

In this communication, we simplify the analysis by introducing an artificial boundary that will allow us to assume that the acoustic domain is in fact bounded. As depicted in the right panel of \autoref{fig:unbounded}, we pick a polygon with boundary $\Gamma_A$  (the subscript standing for ``artificial") that compactly contains the elastic domain $\Omega_E$. The boundary $\Gamma_A$ is divided into mutually disjoint Dirichlet and Neumann segments (denoted respectively by $\gaadir$ and $\gaaneu$) such that $\gaa = \gaadir \cup \gaaneu$. The acoustic domain $\Omega_A$ is then defined to be the region exterior to $\Omega_E$ and contained inside the polygon. Its boundary takes the form
\[
\partial \Omega_A : = \Gamma \cup \Gamma_A^D \cup \Gamma_A^N
\]
where the three components are mutually disjoint and $\Gamma$ denotes the interface between the acoustic and elastic regions. We emphasize that the boundary conditions imposed on $\gaa$ do not attempt to account for a physically outgoing wave, but simply to ensure the well-posedness of the simplified problem. The goal of this work is to establish the well-posedness theory for the coupling of HDG discretizations for elastic and acoustic wave propagation. The treatment of the fully unbounded problem with appropriate outgoing boundary conditions will be the subject of a separate communication. 

Assuming that at the initial time the incident wave $v^{\text{inc}}$ is supported away from the elastic domain $\Omega_E$, the distributional version of the system above admits a Laplace transform \cite{HsSaSa:2016} that maps time differentiation to multiplication by the Laplace parameter $s\in \{z\in \C: \re{z}>0\}$. Upon Laplace transformation and using the same symbols for the unknowns in the time domain and in the Laplace domain, the elastic wave $\uv$ and the scattered acoustic wave $v$ satisfy the coupled system of equations in mixed form 
\begin{subequations}\label{Equation1}
\begin{alignat}{6}
\label{Equation1A}
\hookei\si-\nabla \uv+ \g& =\ten{0} &\qquad & \text{ in }\ome ,\\
\label{Equation1B}
-\midiv\si +\rE s^2\uv& =\efe & \qquad &  \text{ in }  \ome,\\
\label{Equation1C}
\q - \nabla v & = \bos{0} & \qquad & \text{ in } \oma, \\
\label{Equation1D}
-\midiv\q+(s/c)^2v& =f\hspace{0.5cm} & \qquad & \text{ in } \oma,\\
\label{Equation1E}
\q \cdot \nora - s\, \uv\cdot \nore & = -\nabla \vinc \cdot \nora & \qquad & \text{ on }  \ga,\\
\label{Equation1F}
- \si  \nore + \rA s\, v \,\nora & = -\rA s\, \vinc \,\nora & \qquad & \text{ on }  \ga, \\
\label{Equation1G}
v & = g_D & \qquad & \text{ on }  \gaadir,\\
\label{Equation1H}
\q\cdot \nora & = g_N & \qquad & \text{ on }  \gaaneu.
\end{alignat}
\end{subequations} 
Here, $\boldsymbol q$ is the acoustic velocity field, and $g_D \in H^{1/2}(\gaadir)$ and $g_N \in H^{-1/2}(\gaaneu)$ are given boundary data. 

In the system above, equations \eqref{Equation1A} and \eqref{Equation1B} account for the Navier-Lam\'e or elastic wave equation in the interior of the elastic solid $\ome$. Similarly, equations \eqref{Equation1C} and \eqref{Equation1D} are the mixed form of the acoustic wave equation in $\oma$. The elastic and acoustic variables are coupled through the continuity of the normal component of the velocity field across the interface $\ga$, encoded in equation \eqref{Equation1E}, and the balance of normal forces at the contact surface, given in \eqref{Equation1F}. The nonphysical boundary conditions \eqref{Equation1G} and \eqref{Equation1H} prescribed at the artificial boundary $\gaa$ are given to ensure the well-posedness of the problem.

In the literature, there is a vast amount of research related to fluid-structure interaction problems. For instance, some of them use a Mixed Finite Elements approach \cite{DoGaMe:2015,GaMaMe:2012} and there are also couplings of this technique with Boundary Element Methods \cite{GaHeMe:2014}. Studies on their spectral problems \cite{MeMoRo:2014} and an analysis of the elastoacustic problem in the time domain \cite{ArRoVe:2019} have been done. However, most of these works assume a time-harmonic regime, while we intend to focus on the transient regime. A notable time--domain contribution is the very recent contribution \cite{Mottier2025}, where a Hybrid High Order (HHO) method is used for a similar acoustic/elastic interaction in the time domain.

Since two different systems of PDEs posed in different domains are being coupled across an interface, we prefer to use a discontinuous Galerkin scheme due to its flexibility to handle the transmission conditions. In particular, by considering the HDG method introduced in \cite{CoGoLa2009}, it is very easy to impose transmission conditions from the computational point of view. In fact, in HDG schemes the only globally coupled degrees of freedom are precisely those of the numerical traces on the boundaries between elements, while the remaining unknowns are obtained by solving local problems in each element. Therefore, if we have two independent HDG solvers, one for the acoustic problem and another one for the elastic system, we can couple them across the interface through the numerical traces associated with the acoustic wave $v$ and the elastic displacements $\boldsymbol{u}$.   

After \cite{CoGoLa2009} and the pioneering work \cite{CoGoSa2010} that set a framework that simplifies the analysis of a family of HDG schemes by introducing a suitable projection, HDG schemes have been developed for a wide variety of problems. For example, convection-diffusion equation \cite{FuQiZh2015,NgPeCo2009}, Stokes flow \cite{CoGoNgPe2011,GaSe2016}; Brinkman, Oseen and Navier--Stokes equations \cite{CeCoNgPe2013,CeCoQi2017,FuJiQi2016,NgPeCo2011NS}. In the context of electromagnetism and wave propagation problems, HDG schemes have also been introduced: Maxwell's operator \cite{ChQiSh2018,ChQiShSo2017}, eddy current problems \cite{BuLoOs2018}, Maxwell's equations in the frequency-domain \cite{FePeXu2016,NgPeCo2011} and heterogeneous media \cite{CaLoOsSo2020} and Helmholtz equation \cite{ChPeXu2013,GrMo2011,ZhWu2021}, and even for nonlinear problems arising from plasma physics \cite{SaSaSo:2021a,SaSaSo:2019,SaSo:2018,SaSoCe:2018}. For the elasticity problem, we refer the reader to \cite{Ke,QiShSh2018}. The preceding list of references is not exhaustive, but provides an overview of the development of HDG schemes during the last fifteen years.

On the other hand, in the context of coupled problems with piecewise linear interfaces, HDG schemes have been proposed for elliptic \cite{Huynh2013} and for the Stokes interface problems \cite{Wang2013HDG}, and for Stokes-Darcy coupling \cite{GaSe2017}. The influence of hanging-nodes along the interface and the use of different polynomial degree over each local space, have been analyzed in \cite{Chen2012,Chen2013}. Recently, a new approach based on the Transfer Path Method \cite{CoQiSo2014,CoSo2012,SaSaSo2022} has been proposed to handle discrete interfaces that do not necessarily coincide with the true interface, as in the case of a curved interface \cite{BeMaSo2025,MaNgSo2022,SoTeNgPe2022}. This technique produces a high order method and is closely related with our ultimate goal, where it is crucial to have a numerical scheme that couples an HDG discretization of the problem posed in an bounded domain considering a solid with a curved boundary, and a representation of the acoustic wave in the unbounded region. To the best of our knowledge, the use of HDG schemes has not been analyzed for the coupled problem \eqref{Equation1}, and the main contribution of this work is to provide a convergence analysis.
%
\section{Preliminaries and notation}
%
\subsection{Sobolev spaces.}

Let $\mathcal{O}$ be a Lipschitz continuous domain in $\mathbb{R}^{n}$.
We use standard notations for Lebesgue $\mathrm L^t(\mathcal{O})$ and 
Sobolev spaces $\mathrm W^{l,t}(\mathcal{O})$, with $l\, \geq \, 0$ and $t\in [1,+\infty)$. Here $\mathrm W^{0,t}(\mathcal{O}) \,=\, \mathrm L^t(\mathcal{O})$, and if $t\,=\,2$ 
we write $\mathrm H^l(\mathcal{O})$ instead of $\mathrm W^{l,2}(\mathcal{O})$, with the corresponding norm and seminorm 
denoted by $\|\cdot\|_{H^{l}(\mathcal{O})}$ and $|\cdot|_{H^{l}(\mathcal{O})}$, respectively. The spaces of vector-valued functions will be denoted in boldface, therefore $\boldsymbol{H}^{s}(\mathcal{O}) := [H^{s}(\mathcal{O})]^{n}$, whereas for tensor-valued functions, we write $\underline{\boldsymbol{H}}^{s}(\mathcal{O}) := [H^{s}(\mathcal{O})]^{n\times n}$. Using the same notation, we write $\bos{L}^2(\mathcal{O}):=[L^2(\mathcal{O})]^n$ and $\uline{\bos{L}}^2(\mathcal{O}):=[L^2(\mathcal{O})]^{n\times n}.$

The complex $L^2$-inner products will be denoted by $(\cdot,\cdot)_{\mathcal O}$ and $\langle\cdot,\cdot\rangle_{\Sigma}$, where $\Sigma$ is either a Lipschitz curve ($n=2$) or a surface ($n=3$). The associated norms will be denoted by $\|\cdot\|_{\mathcal O}$ and $\|\cdot\|_{\Sigma}$. 

It is easy to verify that Hooke's tensor satisfies the following inequalities for all $\etat\in\uline{\bos{L}}^2(\mathcal{O})$:
\begin{align*}
\lef(\dfrac{1}{2\mu}+\dfrac{ n^2\lambda}{2\mu(n\lambda+2\mu)}\rig)^{-1}\|\etat\|_{\mathcal{O},\hookei}^2 &\leq\|\etat\|^2_{\mathcal{O}}\leq 2\mu\|\etat\|_{\mathcal{O},\hookei}^2,\\[1ex]
\|\etat\|_{\mathcal{O},\hooke}^2 &\leq(2\mu+n^2\lambda)\|\etat\|^2_{\mathcal{O}},\end{align*}
where we denote $\|\cdot\|_{\mathcal{O},\hookei}:=(\hookei\cdot,\cdot)_{\mathcal{O}}^{1/2}$ and $\|\cdot\|_{\mathcal{O},\hooke}:=(\hooke\cdot,\cdot)_{\mathcal{O}}^{1/2}$.
%
\subsection{Mesh and mesh-dependent inner products.}
%
Let $\ta$ and $\te$ be two families of regular triangulations of $\oma$ and $\ome$, respectively. We will assume that these triangulations are compatible along the common interface $\ga$ and that both are characterized by a common mesh size $h$ in their respective domains. Given an element $K$, $h_K$ will denote its diameter and $\nor_K$ its outward unit normal. When there is no confusion, we will simply write $\nor$ instead of $\nor_K$. Set $\dagger\in\{A,E\}$, then $\partial\mcal{T}_\dagger:=\{\partial K:K\in\mcal{T}_\dagger\}$ and let $\mcal{E}_\dagger$ denote the set of all faces $F$ of all elements $K\in\mcal{T}_\dagger$. We will also use the following notation for $L^2$ inner products of scalar-, vector- and tensor-valued functions, respectively, over an integration domain $D$:
\[
(u,v)_{D} : = \int_D u\overline{v},\qquad (\bos u,\bos v)_D = \int_D \bos u\cdot\overline{\bos v}, \qquad (\bos M,\bos N)_D = \int_{D} \bos M:\overline{\bos N},
\]
where the overline denotes complex conjugation and the colon ``$\,:\,$" is used to denote the Frobenius inner product of matrices
\[
\bos M:\bos N : = \sum_{i,j=1}^n M_{ij}N_{ij}.
\]
With this notation we can express the mesh-dependent $L^2$ inner products as
\[
\lef(u,v\rig)_{\mcal{T}_\dagger}:=\dis\sum_{K\in\mcal{T}_\dagger}   \lef(u,v\rig)_{K}, \lef(\bos u,\bos v\rig)_{\mcal{T}_\dagger}:=\dis\sum_{K\in\mcal{T}_\dagger} \lef(\bos u,\bos v\rig)_{K}, \quad \lef(\ten{M},\ten{N}\rig)_{\mcal{T}_\dagger}:=\dis\sum_{K\in\mcal{T}_\dagger} \lef(\ten{M},\ten{N}\rig)_{K},
\]
along with the inner products over the mesh skeleton 
\[
\langle u,v\rangle_{\partial\mcal{T}_\dagger}:=\!\dis\sum_{K\in\mcal{T}_\dagger}\!\lef\langle u,v\rig\rangle_{\partial K}, \langle\bos u,\bos v\rangle_{\partial\mcal{T}_\dagger}:=\!\dis\sum_{K\in\mcal{T}_\dagger} \!\lef\langle\bos u,\bos v\rig\rangle_{\partial K}, \langle\ten{M},\ten{N}\rangle_{\partial\mcal{T}_\dagger}:=\!\dis\sum_{K\in\mcal{T}_\dagger} \!\lef\langle\ten{M},\ten{N}\rig\rangle_{\partial K}.
\]
We denote the norms induced by these inner products by
\[
\|\cdot\|_{\mcal{T}_\dagger}:=\sqrt{\lef(\cdot,\cdot\rig)_{\mcal{T}_\dagger}} \qquad \text{ and } \qquad \|\cdot\|_{\partial\mcal{T}_\dagger}:=\sqrt{\langle\cdot,\cdot\rangle_{\partial\mcal{T}_\dagger}}.
\]
Finally, to avoid proliferation of superflous constants, we will write $a \lesssim b$ when there exists a positive constant $C$, independent of the mesh size, such that $a\leq C b$.

\subsection{The HDG polynomial spaces}

We will make use of the discrete spaces for the HDG method proposed in \cite{Ke} for simplices. For an element $K\in \ta\cup\te$, we define the following function spaces. The set of scalar-valued polynomials of degree at most $k$ defined over $K$ will be denoted by $\pk{k}{K}$, while the corresponding vector and tensor product spaces are denoted respectively as
\[
\pkv{k}{K}:=[\pk{k}{K}]^{n} \qquad \text{ and } \qquad \pkt{k}{K}:=[\pk{k}{K}]^{n\times n}.
\]
The polynomial spaces of degree \textit{exactly} $k$ will be denoted with a tilde as $\widetilde{\mathcal P}_k(K)$, $\widetilde{\boldsymbol{\mathcal P}}_k(K)$, and $\widetilde{\uline{\boldsymbol{\mathcal P}}}_k(K)$. We now define
\[
A_{ij}\lef(K\rig):=
	\begin{cases} 
       \pk{k}{K} & \text{if} \hspace{0.3cm} i \neq j,  \\
       0 &  \text{if}  \hspace{0.3cm}i = j,
    \end{cases},
\]
and use it to construct the matrix-valued space
\[
\aspace{K}:=[A_{ij}\lef(K\rig)]^{n\times n}.
\]
We will denote the space of $L^2$ integrable skew-symmetric matrices over $K$ by
\[
 \asspace{K}:=\lbrace {\bos M} \in \ldt{K}:{\bos M}+{\bos M}^{\top}={\bos{0}}\rbrace,
\]
and will require that $\aspace{K} \subset \asspace{K}$.

Now, we would like to define a divergence-free space of functions through the use of bubble matrices or bubble scalars, depending on the dimension, as in \cite{CaSo:2023,CoGoGu:2009,Ke,Gu:2010}. Following \cite{Gu:2010}, a matrix-valued function $\bos{b}$ defined in $\ome$ is said to be an admissible bubble matrix if for each $K\in\te$ the matrix $\bos{b}_K:=\ra{\bos{b}}{K}$ is a matrix with polynomial entries that satisfies
\begin{enumerate}
    \item The tangential components of each row of $\bos{b}_K$ vanish on all the faces of $K$,
    \item There exists $C_1>0$ such that $C_1\ipk{\bos{v}}{\bos{v}}\leq\ipk{\bos{vb}_K}{\bos{v}}$, for all $\bos{v}\in\uline{\bos{L}}^2(K)$,
    \item There exists $C_2>0$ such that $\|\bos{b}_K\|_{\uline{\bos{L}}^\infty(K)}\leq C_2$,
\end{enumerate}
where the constants $C_1$ and $C_2$ depend only on the shape regularity of $\te$. 

Thus, following \cite{CoGoGu:2009,Ke}, if $\eta_F$ is the barycentric coordinate associated to the edge $F$ of $K$, and if we define
\[
\bos{b}_K:= { \begin{cases} \displaystyle\prod_{F \subset \partial K} \eta_{F} & \text{in 2D }, \\[1ex]  \displaystyle\sum_{F \subset \partial K} \lef[ \displaystyle\prod_{F' \subset \partial K \setminus \lbrace F \rbrace}\eta_{F'}\rig]\nabla\eta_{F}\otimes\nabla\eta_{F} & \text{in 3D},\end{cases}}
\]
the polynomial space $\bspace{K}$ associated to bubble functions is defined as:
\[
\bspace{K}:= \nabla\times((\nabla\times\aspace{K})\bos{b}_K).
\]
We can observe that any function
\[
\bos{v}\in\uline{\bos{\mathcal{B}}}_h:=\{\etat\in\uline{\bos{L}}^2(\ome): \ra{\etat}{K}\in\bspace{K}, K\in\te\}
\]
is such that
\[
\ra{\midiv\bos{v}}{K}=0,\forall\, K\in\te \quad \text{ and } \quad \ra{\bos{v}\nor}{F}=0,\forall F\in\ee.
\]
In the three-dimensional case the curl operator acts row-wise, while in the two-dimensional case the curl of matrices and column vectors are defined respectively by
\[
\nabla\times\begin{pmatrix}
    M_{11} & M_{12}\\ M_{21} & M_{22}
\end{pmatrix}:=\begin{pmatrix}
    \partial_xM_{12}-\partial_yM_{11}\\\partial_xM_{22}-\partial_yM_{21}
\end{pmatrix}\quad\text{and}\quad \nabla\times \begin{pmatrix}
    m_1\\ m_2
\end{pmatrix}:=\begin{pmatrix}
    -\partial_y m_1 & \partial_x m_1\\
    -\partial_y m_2 & \partial_x m_2
\end{pmatrix}
\]

We will also make use of the local space $
\vtspace{K} :=\pkt{k}{K}+\bspace{K}$, and notice that
\[
\vtspace{K} = \pkt{k}{K} +  \nabla \times ((\nabla \times \aspace{K}) \bos{b}_K) = \pkt{k}{K}\oplus \nabla \times ((\nabla \times \atilspace{K} )\bos{b}_K),
\]
where $\atilspace{K} := \aspace{K} \cap \pkttil{k}{K}$.

%
\section{An HDG discretization}
%
Let us begin by introducing the piecewise polynomial spaces
\begin{subequations}\label{spaces}
\begin{align}
\vhspace&=\lbrace\taut \in \ldt{\te}: \ra{\taut}{K}\in \vtspace{K},\hspace{0.3cm}\forall K \in \te \rbrace, \\
\whvespace&=\lbrace\tv \in \ldv{\te}: \ra{\tv}{K}\in\pkv{k}{K},\hspace{0.3cm} \forall K \in \te \rbrace, \\
\ahspace&=\lbrace\etat\in \ldt{\te}: \ra{\etat}{K}\in \aspace{K},\hspace{0.3cm} \forall K \in \te \rbrace, \\
\mhvspace&=\lbrace\muv \in \ldv{\ee}: \ra{\muv}{F}\in \pkv{k}{F},\hspace{0.3cm} \forall F \in \ee \rbrace,\\
\whvaspace&=\lbrace\rv \in \ldv{\ta}: \ra{\rv}{K}\in\pkv{k}{K},\hspace{0.3cm} \forall K \in \ta \rbrace, \\
\whspace&=\lbrace w\in \ld{\ta}: \ra{w}{K}\in \pk{k}{K},\hspace{0.3cm} \forall K \in \ta \rbrace, \\
\mhspace&=\lbrace\xi\in \ld{\ea}: \ra{\xi}{F}\in \pk{k}{F},\hspace{0.3cm} \forall F \in \ea \rbrace.
\end{align}
\end{subequations}
The HDG discretization seeks a piecewise polynomial approximation 
\[
(\sih,\uvh,\gh,\uvhg,\qh,\vh,\vhg)\in \vhspace\times\whvespace\times\ahspace\times\mhvspace\times\whvaspace\times \whspace\times \mhspace
\]
of the exact solution $(\si,\uv,\g,\ra{\uv}{\ee},\q,v,\ra{v}{\ea})$. The approximation must satisfy the discrete weak formulation
\begin{subequations}\label{hdgscheme}
\begin{align}
\ipte{\hookei\sih}{\taut}+\ipte{\uvh}{\midiv \taut}+\ipte{\gh}{\taut}-\ipbe{\uvhg}{\taut\nor}&=0,\label{seqh}\\[1ex]
\ipte{\sih}{\nabla \tv}-\ipbe{\sihg \nor}{\tv} +\rE s^2\ipte{\uvh}{\tv}& = \ipte{\efe}{\tv},\label{seqi}\\[1ex]
\ipte{\sih}{\etat} & = 0,\label{seqj}\\[1ex]
\ipfe{\sihg\nor}{\muv} & = 0,\label{seqk}\\[1ex]
\ipta{\qh}{\rv}+\ipta{\vh}{\midiv \rv}-\ipba{\vhg}{\rv \cdot\nor}&=0,\label{HDG:acus_1}\\[1ex]
\ipta{\qh}{\nabla w}-\ipba{\qhg\cdot \nor}{w} +(s/c)^2\ipta{\vh}{w}& = \ipta{f}{w},\label{HDG:acus_2}\\[1ex]
\langle \qhg\cdot \nor,\xi\rangle_{\dta \setminus(\ga \cup \gaadir)} & = \langle g_N ,\xi\rangle_{\gaaneu} ,\label{seqn}\\[1ex]
\langle \vhg,\xi\rangle_{\gaadir} & = \langle g_D ,\xi\rangle_{\gaadir} ,\label{seqo}\\[1ex]
\label{Equation1E_h}
\ipg{\qhg \cdot \nora - s\, \uvhg\cdot \nore}{\xi} & = -\ipg{\nabla \vinc \cdot \nora}{\xi},\\[1ex]
\label{Equation1F_h}
\ipg{-\sihg \nore + \rA s\, \vhg \,\nora}{\muv} & = -\rA s\, \ipg{\vinc\,\nora}{\muv}
\end{align}
for all test functions $(\taut,\tv,\etat,\muv,\rv,w,\xi) \in \vhspace\times\whvespace\times\ahspace\times\mhvspace\times\whvaspace\times \whspace\times \mhspace$, where
\begin{align}
\label{eq:15k}
\sihg \nor&:= \sih\nor-\tau_E(\uvh-\uvhg)\hspace{1cm} \text{on} \hspace{0.5cm} \dte , \\
\qhg\cdot \nor&:= \qh \cdot \nor-\tau_A(\vh-\vhg)\hspace{1cm} \text{on} \hspace{0.5cm} \dta. 
\end{align}
Here, $\tau_E$ and $\tau_A$ are stabilization parameters whose properties will be determined when analyzing the scheme.
\end{subequations}

\section{Discrete well posedness.}
\begin{theorem}
\label{ExandUniq}
    If $\re{s\tau_A}>0$ and $\re{s\tau_E}>0$, then the scheme \eqref{hdgscheme} has a unique solution. 
\end{theorem}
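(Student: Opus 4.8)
Since the discrete system \eqref{hdgscheme} is square and linear (finitely many unknowns and equations), it suffices to prove uniqueness, i.e.\ that the only solution with vanishing data $\efe=0$, $f=0$, $g_D=0$, $g_N=0$, $\vinc=0$ is the trivial one. So the plan is to set all data to zero and test the homogeneous system with the discrete solution itself.

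First I would choose the test functions $\taut=\conj{\sih}$, $\tv=\conj{\uvh}$, $\etat=\conj{\gh}$, $\muv=\conj{\uvhg}$ in the elastic block \eqref{seqh}--\eqref{seqk}, and $\rv=\conj{\qh}$, $w=\conj{\vh}$, $\xi=\conj{\vhg}$ in the acoustic block \eqref{HDG:acus_1}--\eqref{seqo}, and also use $\xi=\conj{\vhg}$ in the coupling relation \eqref{Equation1E_h} and $\muv=\conj{\uvhg}$ in \eqref{Equation1F_h}. Adding \eqref{seqh} (conjugated appropriately) to \eqref{seqi} causes the volume terms $\ipte{\uvh}{\midiv\taut}$ and $\ipte{\sih}{\nabla\tv}$ to combine, after integration by parts, with the numerical-trace terms; using \eqref{seqj} to kill $\ipte{\sih}{\gh}$ and the definition \eqref{eq:15k} of $\sihg\nor$, the elastic part collapses to something like $\ntehookei{\sih}^2+\rE s^2\nte{\uvh}^2+\tau_E\|\uvh-\uvhg\|_{\dte}^2$ plus interface terms $\langle\sihg\nore,\uvhg\rangle_\ga$. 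An identical manipulation on the acoustic side yields $\nta{\qh}^2+(s/c)^2\nta{\vh}^2+\tau_A\|\vh-\vhg\|_{\dta}^2$ plus interface and artificial-boundary terms; the $\gaadir$ and $\gaaneu$ contributions vanish by \eqref{seqn}--\eqref{seqo} with zero data and by \eqref{seqk}. The coupling conditions \eqref{Equation1E_h}--\eqref{Equation1F_h} are designed precisely so that the two interface terms on $\ga$ cancel (up to the factor $\rA s$ and a factor $s$), which is the algebraic heart of the argument; one multiplies the elastic identity by an appropriate power/conjugate of $s$ and $\rA$ so the $\langle\qhg\cdot\nora,\cdot\rangle_\ga$ and $\langle\sihg\nore,\cdot\rangle_\ga$ pieces annihilate.

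After this bookkeeping I expect to arrive at a single identity of the form
\[
\ntehookei{\sih}^2 + \rho_E s^2\nte{\uvh}^2 + s\tau_E\|\uvh-\uvhg\|_{\dte}^2
+ \alpha\Big(\nta{\qh}^2 + (s/c)^2\nta{\vh}^2 + s\tau_A\|\vh-\vhg\|_{\dta}^2\Big) = 0
\]
for a suitable positive weight $\alpha$ (something like $\rA^{-1}$), where the interface terms have cancelled. Taking real parts and using $\re{s\tau_E}>0$, $\re{s\tau_A}>0$ — and noting $\re{\rho_E s^2}$, $\re{(s/c)^2}$ need not be positive, so the decisive positive contributions are $\ntehookei{\sih}^2$, $\nta{\qh}^2$, and the stabilization terms — I would first conclude $\sih=0$ and $\qh=0$ on the whole mesh, and $\uvh=\uvhg$ on $\dte$, $\vh=\vhg$ on $\dta$. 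Then I would recover the remaining unknowns: with $\sih=0$, equation \eqref{seqi} gives $\rho_E s^2\ipte{\uvh}{\tv}=0$ for all $\tv$, hence $\uvh=0$ (since $s\neq0$), and therefore $\uvhg=0$; similarly \eqref{HDG:acus_2} with $\qh=0$ forces $\vh=0$ and then $\vhg=0$. Finally, going back to \eqref{seqh} with $\sih=\uvh=\uvhg=0$ leaves $\ipte{\gh}{\taut}=0$ for all $\taut\in\vhspace$; since $\gh\in\ahspace\subset\vhspace$ (the polynomial tensors $\pkt{k}{K}$ are contained in $\vtspace{K}$), choosing $\taut=\gh$ yields $\gh=0$. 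All components vanish, so the homogeneous system has only the trivial solution and the scheme is uniquely solvable.

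The main obstacle I anticipate is the interface cancellation: one must track the exact constants and conjugations so that the $\ga$-terms coming from the elastic energy identity and from \eqref{Equation1F_h} exactly offset those coming from the acoustic identity and \eqref{Equation1E_h}; a wrong power of $s$ or $\rho_A$ there would leave an uncontrolled boundary term. A secondary, more technical point is making sure the numerical-trace single-valuedness condition \eqref{seqk} is invoked correctly so that the jump terms of $\sihg\nor$ across interior faces of $\te$ (and on $\dte\setminus\ga$) drop out when $\muv$ ranges over $\mhvspace$; the analogous acoustic statement is handled by \eqref{seqn} with $g_N=0$.
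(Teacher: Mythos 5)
Your overall strategy is the paper's: reduce to uniqueness of the homogeneous square linear system, test with the discrete solution itself, cancel the interface terms via \eqref{Equation1E_h}--\eqref{Equation1F_h}, and then recover the remaining unknowns. However, there is a genuine gap at the decisive step. The energy identity you display is not correctly weighted, and the conclusion you draw from it is invalid: its real part contains the indefinite terms $\re{\rE s^2}\nte{\uvh}^2$ and $\re{(s/c)^2}\nta{\vh}^2$, which can be strictly negative (e.g.\ $s=1+2i$ gives $\re{s^2}=-3$), and an identity of the form ``nonnegative terms $+$ indefinite terms $=0$'' does not allow you to ``first conclude $\sih=\ten 0$ and $\qh=\bos 0$'' by discarding the indefinite part. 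The missing idea—exactly what the paper does—is to multiply the elastic identity by $s$ and the acoustic identity by $\rA s$, and to add the elastic identity to the \emph{conjugate} of the acoustic one. With that weighting the zeroth-order terms become $\rE\conj{s}|s|^2\nte{\uvh}^2$ and $\rA s(|s|/c)^2\nta{\vh}^2$, whose real parts are $\rE|s|^2\re{s}\nte{\uvh}^2\ge 0$ and $(\rA/c^2)|s|^2\re{s}\nta{\vh}^2\ge 0$, while the hypotheses $\re{s\tau_E}>0$, $\re{s\tau_A}>0$ make the stabilization terms nonnegative; hence \emph{every} term in the real part is nonnegative and $\sih,\uvh,\qh,\vh$ and the jumps $\uvh-\uvhg$, $\vh-\vhg$ all vanish at once.

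The same weighting is what resolves the ``main obstacle'' you flag but leave unresolved. The interface contributions of the two weighted identities are $-s\ipg{\uvhg}{\sihg\nore}$ and $-\rA\conj{s}\,\conj{\ipg{\vhg}{\qhg\cdot\nora}}$; inserting the homogeneous transmission conditions \eqref{Equation1F_h} (with $\muv=\uvhg$) and \eqref{Equation1E_h} (with $\xi=\vhg$), and using $\nore=-\nora$ on $\ga$, these become $-\rA|s|^2\ipg{\uvhg}{\vhg\nora}$ and $+\rA|s|^2\ipg{\uvhg}{\vhg\nora}$, which cancel exactly; with your unweighted, un-conjugated combination the two pieces differ by a conjugation and do not cancel, leaving an uncontrolled boundary term, so the identity you wrote (with the $\ga$-terms already removed) is not what the computation produces. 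Once the correctly weighted identity is in hand, your recovery of $\uvh$ and $\vh$ from \eqref{seqi} and \eqref{HDG:acus_2} is unnecessary (they vanish directly from the energy identity), and your last step for $\gh$—taking $\taut=\gh$ in \eqref{seqh} after $\sih,\uvh,\uvhg$ vanish, which is legitimate since $\aspace{K}\subset\pkt{k}{K}\subset\vtspace{K}$ so $\ahspace\subset\vhspace$—is in fact a simpler route than the paper's bubble-matrix decomposition $\gh=\gh^0+\gh^c$; but as written it sits downstream of the invalid sign argument, so the proof as a whole does not go through.
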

\begin{proof}
By the Fredholm alternative, it is enough to show uniqueness of the solution. To that end, if we assume zero sources, we will show that the solution to the corresponding system is the trivial one.

Let
\[
\vinc = 0 \qquad \text{ and } \qquad (f,\efe, g_D, g_N) = (0, \bos{0}, 0, 0),
\]
and choose
\[
(\taut , \tv ,\etat , \muv , \rv , w) = (\sihg, \uvh, \gh, \uvhg, \qh, \vh)
\qquad \text{ and } \qquad
\xi = \left\{ \begin{array}{rl} \vhg, & \text{on }\dta\setminus \gaadir \\ \qhg\cdot\nor, & \text{on }\gaadir \\\end{array} \right.\,.
\]
With this choice of test functions, applying integration by parts to \eqref{seqi} and adding its conjugate to \eqref{seqh} we obtain
\begin{align*}
       &\ipte{\hookei\sih}{\sih} + \ipte{\uvh}{\midiv\sih} + \ipte{\gh}{\sih} - \ipbe{\uvhg}{\sih\nor}\\
       &-\conj{\ipte{\midiv\sih}{\uvh}} + \conj{\ipbe{\sih\nor}{\uvh}} - \conj{\ipbe{\sihg\nor}{\uvh}} + \rE\conj{s^2}\ipte{\uvh}{\uvh} = 0.
\end{align*}
We know from \eqref{seqj} that $\ipte{\sih}{\gh} = 0$, so the latter equation becomes
\begin{align*}
       \ipte{\hookei\sih}{\sih}+\conj{\ipbe{\sih\nor-\sihg\nor}{\uvh}}
       -\ipbe{\uvhg}{\sih\nor}+\rE\conj{s^2}\ipte{\uvh}{\uvh} = 0.
\end{align*}  
Adding and subtracting $\uvhg$ in the second argument of the second term, we have that
\begin{align*}
\ntehookei{\sih}^2 &+\conj{\ipbe{\sih\nor-\sihg\nor}{\uvh-\uvhg}}+\conj{\ipbe{\sih\nor-\sihg\nor}{\uvhg}}\\
       &-\ipbe{\uvhg}{\sih\nor}+\rE\conj{s^2}\nte{\uvh}^2 = 0.
\end{align*}
Multiplying by $s$ and using \eqref{seqk}, along with the definition \eqref{eq:15k}, we obtain
\begin{equation}
    s\ntehookei{\sih}^2 + s\conj{\ipbe{\tau_E(\uvh-\uvhg)}{\uvh-\uvhg}}-s\ipg{\uvhg}{\sihg\nore}+\rE \overline{s}|s|^2\nte{\uvh}^2=0. \label{eau:elastic}
\end{equation}
Analogously for the acoustic terms, \eqref{HDG:acus_2} is integrated by parts and its conjugate is added to \eqref{HDG:acus_1}, yielding
\begin{align*}
    \nta{\qh}^2+&\ipta{\vh}{\midiv\qh}-\ipba{\vh}{\qh\cdot\nor}-\conj{\ipta{\midiv\qh}{\vh}}\\
&+\conj{\ipba{\qh\cdot\nor}{\vh}}-\conj{\ipba{\qhg\cdot\nor}{\vh}}+\frac{\conj{s^2}}{c^2}\nta{\vh}^2=0.
\end{align*}
Adding and subtracting $\vhg$ and using \eqref{seqn} and \eqref{seqo}, we can deduce that 
\[
    \nta{\qh}^2+\conj{\ipba{\tau_A(\vh-\vhg)}{\vh-\vhg}}-\ipg{\vhg}{\qhg\cdot\nora}+\frac{\conj{s^2}}{c^2}\nta{\vh}^2=0.
\]
We multiply the latter equation by $\rA s$ to obtain
\begin{equation}
\rA s\nta{\qh}^2+\rA s\conj{\ipba{\tau_A(\vh-\vhg)}{\vh-\vhg}}-\rA s\ipg{\vhg}{\qhg\cdot\nora}+\rA \overline{s}(|s|/c)^2\nta{\vh}^2=0. \label{eau:acoustic}
\end{equation}
Adding \eqref{eau:elastic} with the conjugate of \eqref{eau:acoustic} leads to
\begin{equation}\label{eq:temp1}
\begin{array}{l}
     \;\;\,s\ntehookei{\sih}^2 + s\conj{\ipbe{\tau_E(\uvh-\uvhg)}{\uvh-\uvhg}}-s\ipg{\uvhg}{\sihg\nore}+\rE \overline{s}|s|^2\nte{\uvh}^2\\
     
    +\rA \conj{s}\nta{\qh}^2+\rA \conj{s}\ipba{\tau_A(\vh-\vhg)}{\vh-\vhg}-\rA \conj{s}\conj{\ipg{\vhg}{\qhg\cdot\nora}}+\rA \overline{s}(|s|/c)^2\nta{\vh}^2=0
    \end{array}
\end{equation}
Notice that from \eqref{Equation1E_h} and \eqref{Equation1F_h} we have
\begin{align*}
    -s\ipg{\uvhg}{\sihg\nore}-\rA\conj{s}\conj{\ipg{\vhg}{\qhg\cdot\nora}}
       = & -s\ipg{\uvhg}{\sihg\nore-\rA s\vhg\nora}-s\ipg{\uvhg}{\rA s\vhg\nora}\\&-\rA\conj{s}\conj{\ipg{\vhg}{\qhg\cdot\nora-s\uvhg\cdot\nore}}-\rA\conj{s}\conj{\ipg{\vhg}{s\uvhg\cdot\nore}}\\
    = & -s\ipg{\uvhg}{\rA s\vhg\nora}-\rA\conj{s}\conj{\ipg{\vhg}{s\uvhg\cdot\nore}}\\
       = & -s\conj{s}\rA\ipg{\uvhg}{\vhg\nora}+s\conj{s}\rA\ipg{\uvhg}{\vhg\nora}
       =  \; 0.
\end{align*}
So, \eqref{eq:temp1} is equivalent to
\begin{align*}
    &s\ntehookei{\sih}^2 + s\conj{\ipbe{\tau_E(\uvh-\uvhg)}{\uvh-\uvhg}}+\rE \overline{s}|s|^2\nte{\uvh}^2\\
+&\rA \conj{s}\nta{\qh}^2+\rA \conj{s}\ipba{\tau_A(\vh-\vhg)}{\vh-\vhg}+\rA s(|s|/c)^2\nta{\vh}^2=0.
\end{align*}
Thus, taking real part of this expression, we obtain
\[
\mathscr{E}_E^2 + \mathscr{E}_A^2 + \rE|s|^2\re{s}\nte{\uvh}^2+\frac{\rA}{c^2}|s|^2\re{s}\nta{\vh}^2 = 0\,,
\]
where we have defined
\begin{align*}
    \mathscr{E}_E &:= \sqrt{\ntehookei{\re{s}^{1/2}\sih}^2 + \nbte{\re{s\tau_E}^{1/2}(\uvh-\uvhg)}^2}\\[1ex]
    \mathscr{E}_A &:= \sqrt{ \nta{\rA^{1/2}\re{s}^{1/2}\qh}^2+\nbta{\rA^{1/2}\re{s\tau_A}^{1/2}(\vh-\vhg)}^2}.
\end{align*}
From here, we can conclude that $\sih = \ten{0}$ in $\te$, $\uvh = \bos{0}$ in $\te$, $\qh = \bos{0}$ in $\ta$, $\vh = 0$ in $\ta$, $\uvhg = \uvh = \bos{0}$ on $\dte$ and $\vhg = \vh = 0$ on $\dta$.

It only remains to show that $\gh = \bos{0}$ in $\te$. This will be achieved by performing an analog of the steps done in the proof of \cite[Lemma 3.6]{CaSo:2023}. We will need the two following technical results proven in \cite{Gu:2010}:
\begin{enumerate}
\item \cite[Lemma 2.8]{Gu:2010} Given $\etat \in \ahzspace:=\{\etat\in\ahspace:\ipk{\etat}{\ten{v}}=0, \forall \ten{v}\in\pkt{0}{K}, \forall K\in\te\}$, there exists $\ten{v}\in \uline{\bos{\mathcal{B}}}_h$ such that
\[
\ten{P}\ten{v} = \etat \quad \text{ and } \quad \nte{\ten{v}}\leq C^0\nte{\etat}.
\]
Here $\ten{P}:\ldt{\ome}\to\ahspace$ is the $L^2$-projection onto $\ahspace$ and $C^0$ is a positive constant independent of $h$, arising from a Poincaré-type inequality and inverse estimates.

\item \cite[Proposition 2.9]{Gu:2010}  Given $\etat\in\ahcspace:=\ahspace\cap\pkt{0}{\te}$, there exists $\ten{v}\in\uline{\bos{H}}(\text{div};\ome)\cap\pkt{1}{\te}$ such that
\begin{equation}\label{eq:temp3}
\midiv\ten{v}=0\,, \qquad 
\ten{P}^c\ten{v}=\etat\,, \qquad
\text{ and } \qquad
\nte{\ten{v}} \leq C^c\nte{\etat}\,,
\end{equation}
where $\ten{P}^c$ is the $L^2$-projection onto $\ahcspace$, and $C^c>0$ is a constant independent of $h$.
\end{enumerate}
Let us consider the orthogonal decomposition
\[
\gh = \gh^0+\gh^c \quad  \text{ where } \quad \ra{\gh^c}{K}:=\frac{1}{|K|}\int_K \gh, \forall K\in\te\,\,\,\, \text{ (component-wise) }\quad \text{ and } \quad \gh^0=\gh-\gh^c.
\]
It is clear that $
\gh^0\in\ahzspace$ and $\gh^c\in\ahcspace$.

By \cite[Lemma 3.9]{Gu:2010}, there exists
\[
\ten{v}^0\in\uline{\bos{\mathcal{B}}_h}:=\{\etat\in\ldt{\ome}: \ra{\etat}{K}\in\bspace{K}, K\in\te\}\subset\vhspace
\]
such that
\begin{equation}\label{eq:temp2}
\ipte{\gh^0}{\ten{\rho}^0} = \ipte{\ten{v}^0}{\ten{\rho}^0} \quad \text{ for all }\,\ten{\rho}^0\in\ahspace.
\end{equation}
Taking $\taut = \ten{v}^0$ in \eqref{seqh}, we obtain
\[
\ipte{\gh^0+\gh^c}{\ten{v}^0} = 0.
\]
Now, considering $\ten{\rho}^0=\gh^c$, and the fact that the decomposition of $\gh$ is orthogonal in $\uline{\bos{L}}^2$, the two expressions above imply
\[
\ipte{\gh^0}{\ten{v}^0} = \ipte{\gh^0}{\gh^c}=0.
\]
Hence, taking $\ten{\rho}^0=\gh^0$ in \eqref{eq:temp2}, the equality above shows that $\ipte{\gh^0}{\ten{v}^0} = \nte{\gh^0}^2 = 0$, and we can conclude that $\gh^0 = \bos{0}$.

Finally, by the second property in \eqref{eq:temp3}, there exists $\ten{v}^c\in\ten{H}(\text{div};\ome)\cap\pkt{1}{\te}$ such that
\[
\ipte{\ten{v}^c}{\ten{\rho}^c}=\ipte{\gh^c}{\ten{\rho}^c} \quad \text{ for all }\,\ten{\rho}^c\in\ahcspace.
\]
Taking $\ten{\rho}^c=\gh^c$ in the expression above we have 
\begin{align} \label{eq:aux_rho2}\ipte{\gh^c}{\ten{v}^c} = \nte{\gh^c}^2.  
\end{align}

Now, recalling that  $\sih = \ten{0}$ and $\uvh = \bos{0}$ in $\te$ and $\uvhg=\bos{0}$ on $\dte$, choosing $\taut=\ten{v}^c$ in \eqref{seqh}, we have that 
$\ipte{\gh}{\ten{v}^c}=0$. Then, since $\gh^0=\bos{0}$, from \eqref{eq:aux_rho2} we conclude $\gh^c=\bos{0}$ in $\te$, and therefore $\gh=0$.
\end{proof}

\section{Error Analysis.}
\subsection{The HDG Projections.}
We will need the HDG projections defined in \cite{CoGoSa2010}. For the acoustic terms, the projected function is denoted by $\bos{\Pi_A}(\q,v) := \lef(\piwa\q,\piw v \rig)$, where $\piwa\q$ and $\piw v$ are the components of the projection in $\whvaspace$ and $\whspace$, respectively. The values of the projection on any simplex $K\in\ta$ are fixed when the components are required to satisfy the equations
\begin{alignat*}{6}
    \lef(\piwa\q,\rv\rig)_K &= (\q,\rv)_K,\qquad&&\forall\rv\in\pkv{k-1}{K},\\
    \lef(\piw v, w\rig)_K &= (v,w)_K,\qquad&&\forall w\in\pk{k-1}{K},\\
    \lef\langle\piwa\q\cdot\nor-\tau_A\piw v,\xi\rig\rangle_F &= \lef\langle\q\cdot\nor-\tau_A \pem v,\xi\rig\rangle_F,\qquad&&\forall\xi\in\pk{k}{F},
\end{alignat*}
for all faces $F$ of the simplex $K\in\ta$, where $\pem$ is the $L^2$ projection onto $F$. It was shown in \cite{CoGoSa2010} that, if $(\q,v)\in\bos{H}^{k+1}(K)\times H^{k+1}(K)$ and $\ra{\tau_A}{\partial K}$ is nonnegative and $\displaystyle \max_{\partial K} \tau_A>0$, the components of the projection satisfy the estimates
\begin{subequations}\label{eq:ProjectionEstimates1}
\begin{align}
    \lef\|\piwa\q-\q\rig\|_K &\lesssim h_K^{k+1}\lef(|\q|_{\bos{H}^{k+1}(K)}+|v|_{H^{k+1}(K)}\rig),\\
    \lef\|\piw v-v\rig\|_K&\lesssim h_K^{k+1}\lef(|v|_{H^{k+1}(K)}+|\midiv\q|_{H^k(K)}\rig).
\end{align}
\end{subequations}
Therefore, for the sake of simplicity, from now on we assume that $\tau_E$ and $\tau_A$ are positive functions.

For the elastic terms, on each element $K\in\te$, a component-wise version of the above projection is defined by $\bos{\Pi_E}(\si,\uv):=\lef(\piv\si,\piwe\uv\rig)\in\pkt{k}{K}\times\pkv{k}{K}$ where
\begin{alignat*}{6}
    \lef(\piv\si,\taut\rig)_K &= (\si,\taut)_K,\qquad&&\forall\taut\in\pkt{k-1}{K},\\
    \lef(\piwe \uv, \tv\rig)_K &= (\uv,\tv)_K, \qquad&&\forall \tv\in\pkv{k-1}{K},\\
    \lef\langle(\piv\si)\nor-\tau_E\piwe \uv,\muv\rig\rangle_F &= \lef\langle\si\nor-\tau_E \pemv\uv,\muv\rig\rangle_F,\qquad&&\forall\muv\in\pkv{k}{F},
\end{alignat*}
for all faces $F$ of the element $K\in\te$. Above, $\pemv$ is the $\bos{L}^2$ projection onto $F$. Analogously, if $(\si,\uv)\in\uline{\bos{H}}^{k+1}(K)\times\bos{H}^{k+1}(K)$, then
\begin{subequations}\label{eq:ProjectionEstimates2}
\begin{align}
    \lef\|\piv\si-\si\rig\|_K &\lesssim h_K^{k+1}\lef(|\si|_{\uline{\bos{H}}^{k+1}(K)}+|\uv|_{\bos{H}^{k+1}(K)}\rig),\\
    \lef\|\piwe \uv-\uv\rig\|_K&\lesssim h_K^{k+1}\lef(|\uv|_{\bos{H}^{k+1}(K)}+|\midiv\si|_{\uline{\bos{H}}^k(K)}\rig).
\end{align}
\end{subequations}
In addition, for each element $K\in\te$, we will denote by $\pia\g$ the $\uline{\bos{L}}^2(K)$-projection of $\g$ on $\aspace{K}$. Thus, if $\g\in\uline{\bos{H}}^{k+1}(K)$, then 
\[
\lef\|\pia\g-\g\rig\|_K\lesssim h_K^{k+1}\lef|\g\rig|_{\uline{\bos{H}}^{k+1}(K)}.
\]
Having defined the projections, we now define the \textit{projection errors} in each of the volume unknowns by
\begin{alignat*}{8}
\delsi:=\,&\si-\piv\si, \qquad& \delu:=\,& \uv-\piwe\uv, \qquad&&\delg:=\,& \g-\pia\g, \\
\delq:=\,& \q-\piwa\q, \qquad& \delv:=\,& v-\piw v. && &
\end{alignat*}
The following quantity will play a fundamental role in the error estimations:
\[
\Theta(\si,\uv,\g,\q,v):=\lef(\|\delsi\|_{\te}^2+\|\delu\|_{\te}^2+\|\delg\|_{\te}^2+\|\delq\|_{\ta}^2+\|\delv\|_{\ta}^2\rig)^{1/2}.
\]
The next lemma follows readily from the projection bounds \eqref{eq:ProjectionEstimates1} and \eqref{eq:ProjectionEstimates2}.
\begin{lemma}\label{theta_bound}
    If $(\si,\uv,\g,\q,v)\in\uline{\bos{H}}^{k+1}(\ome)\times\bos{H}^{k+1}(\ome)\times\uline{\bos{H}}^{k+1}(\ome)\times\bos{H}^{k+1}(\oma)\times H^{k+1}(\oma)$, then 
    \begin{align*}
    &\Theta(\si,\uv,\g,\q,v)\lesssim h^{k+1}\lef(|\si|_{\uline{\bos{H}}^{k+1}(\ome)}+|\uv|_{\bos{H}^{k+1}(\ome)}+|\g|_{\uline{\bos{H}}^{k+1}(\ome)}+|\q|_{\bos{H}^{k+1}(\oma)}+|v|_{H^{k+1}(\oma)}\rig).
    \end{align*}
\end{lemma}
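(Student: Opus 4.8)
The plan is to bound $\Theta$ term by term by adding together the local projection estimates over all elements of the two triangulations. Recall that
\[
\Theta(\si,\uv,\g,\q,v)^2 = \|\delsi\|_{\te}^2+\|\delu\|_{\te}^2+\|\delg\|_{\te}^2+\|\delq\|_{\ta}^2+\|\delv\|_{\ta}^2,
\]
where each norm over $\te$ or $\ta$ is the square root of a sum of local contributions over the elements. So the first step is to invoke the local estimates \eqref{eq:ProjectionEstimates2} on each $K\in\te$ for $\delsi$ and $\delu$, the bound $\|\pia\g-\g\|_K\lesssim h_K^{k+1}|\g|_{\uline{\bos{H}}^{k+1}(K)}$ for $\delg$, and the local estimates \eqref{eq:ProjectionEstimates1} on each $K\in\ta$ for $\delq$ and $\delv$. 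Here one uses that under the hypotheses $(\si,\uv)\in\uline{\bos H}^{k+1}(\ome)\times\bos H^{k+1}(\ome)$ and $(\q,v)\in\bos H^{k+1}(\oma)\times H^{k+1}(\oma)$, so the restrictions to each simplex have the regularity required by \cite{CoGoSa2010}; the assumption that $\tau_E,\tau_A$ are positive (adopted right after \eqref{eq:ProjectionEstimates1}) guarantees the projections are well defined and the estimates hold.

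Next I would sum the squared local bounds. Replacing $h_K$ by the global mesh size $h$ (legitimate since $h_K\le h$ and the implicit constants depend only on shape regularity, not on $h$), a typical term such as $\sum_{K\in\te}\|\delsi\|_K^2$ is controlled by $h^{2(k+1)}\sum_{K\in\te}\big(|\si|_{\uline{\bos H}^{k+1}(K)}+|\uv|_{\bos H^{k+1}(K)}\big)^2$, and using $(a+b)^2\le 2a^2+2b^2$ together with the additivity of the squared Sobolev seminorm over elements, $\sum_{K}|w|_{H^{k+1}(K)}^2=|w|_{H^{k+1}(\mcal O)}^2$, this is $\lesssim h^{2(k+1)}\big(|\si|_{\uline{\bos H}^{k+1}(\ome)}^2+|\uv|_{\bos H^{k+1}(\ome)}^2\big)$. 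The same manipulation handles the $\delu$ term (which also picks up $|\midiv\si|_{\uline{\bos H}^k(\ome)}\le|\si|_{\uline{\bos H}^{k+1}(\ome)}$), the $\delg$ term, and the two acoustic terms $\delq,\delv$ on $\ta$ (with $|\midiv\q|_{H^k(\oma)}\le|\q|_{\bos H^{k+1}(\oma)}$). Adding the five contributions gives $\Theta^2\lesssim h^{2(k+1)}\big(|\si|_{\uline{\bos H}^{k+1}(\ome)}^2+|\uv|_{\bos H^{k+1}(\ome)}^2+|\g|_{\uline{\bos H}^{k+1}(\ome)}^2+|\q|_{\bos H^{k+1}(\oma)}^2+|v|_{H^{k+1}(\oma)}^2\big)$, and taking square roots, together with the elementary inequality $\sqrt{\sum a_i^2}\le\sum a_i$ for nonnegative $a_i$, yields the asserted bound with the seminorms appearing linearly.

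This is essentially a bookkeeping argument, so there is no real obstacle; the only point requiring a little care is the passage from the element-wise estimates to the global one — specifically, making sure the $\midiv$ seminorms that appear on the right-hand sides of \eqref{eq:ProjectionEstimates1} and \eqref{eq:ProjectionEstimates2} are absorbed into the full $H^{k+1}$ seminorms (so that they do not appear separately in the final statement) and that the constants hidden in $\lesssim$ genuinely depend only on the shape regularity of the (regular, quasi-uniform) families $\te,\ta$ and not on $h$. Both are standard: $\midiv$ lowers differentiability by one, and the constant in the HDG projection estimate from \cite{CoGoSa2010} is scaling-invariant and depends only on $k$ and the shape-regularity constant.
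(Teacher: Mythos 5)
Your argument is correct and is exactly the route the paper takes: the paper states that the lemma ``follows readily'' from the local HDG projection estimates \eqref{eq:ProjectionEstimates1}--\eqref{eq:ProjectionEstimates2} (plus the $L^2$-projection bound for $\g$), i.e.\ summing the squared element-wise bounds, using additivity of the Sobolev seminorms over the triangulations and absorbing the $\midiv$ seminorms into the full $H^{k+1}$ seminorms. Your careful handling of $h_K\le h$, the shape-regularity dependence of the constants, and the final square-root step matches the intended (omitted) bookkeeping.
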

%
\subsection{Error estimates.}
%
Let us define the \textit{projections of the errors} (not to be confused with the projection errors defined above):
\begin{alignat*}{10}
    \esi &:= \piv\ten{\sigma}-\sih,&\qquad \esig\nor &:= \pemv(\ten{\sigma}\nor)-\sihg \nor,\qquad&
    \eu &:= \piwe\uv-\uvh,\\  \eug &:= \pemv\uv-\uvhg,& 
    \eg &:= \pia\ten{\gamma}-\gh,&
    \eq &:= \piwa\q-\qh, \\ \eqg\cdot\nor &:= \pem(\q\cdot \nor)-\q_h\cdot \nor,&
    \ev &:= \piw v -\vh,&\evg &:= \pem v-\vhg.
\end{alignat*}
Direct calculations imply that, for all $(\taut,\tv,\etat,\muv,\rv,w,\xi) \in \vhspace\times\whvespace\times\ahspace\times\mhvspace\times\whvaspace\times \whspace\times \mhspace$, the projections of the errors satisfy the following system:
\begin{subequations}
    \begin{align}
\ipte{\hookei\esi}{\taut}+\ipte{\eu}{\midiv \taut}+\ipte{\eg}{\taut}-\ipbe{\eug}{\taut\nor}&=-\ipte{\hookei\delsi}{\taut}-\ipte{\delg}{\taut},\label{error_h}\\[1ex]
\ipte{\esi}{\nabla \tv}-\ipbe{\esig\nor}{\tv} +\rE s^2\ipte{\eu}{\tv}& = -\rE s^2\ipte{\delu}{\tv},\label{error_i}\\[1ex]
\ipte{\esi}{\etat} & = -\ipte{\delsi}{\etat},\label{error_j}\\[1ex]
\ipfe{\esig\nor}{\muv} & = 0,\label{error_k}\\[1ex]
\ipta{\eq}{\rv}+\ipta{\ev}{\midiv \rv}-\ipba{\evg}{\rv \cdot\nor}&=-\ipta{\delq}{\rv},\label{error_HDG:acus_1}\\[1ex]
\ipta{\eq}{\nabla w}-\ipba{\eqg\cdot\nor}{w} +(s/c)^2\ipta{\ev}{w}& = -(s/c)^2\ipta{\delv}{w},\label{error_HDG:acus_2}\\[1ex]
\langle \eqg\cdot\nor,\xi\rangle_{\dta \setminus(\ga \cup \gaadir)} & = 0 ,\label{error_n}\\[1ex]
\langle \evg,\xi\rangle_{\gaadir} & = 0,\label{error_o}\\[1ex]
\label{error_Equation1E_h}
\ipg{\eqg \cdot \nora - s\, \eug\cdot \nore}{\xi} & = 0,\\[1ex]
\label{error_Equation1F_h}
\ipg{-\esig \nore + \rA s\, \evg \,\nora}{\muv} & = 0
\end{align}\label{error_hdgscheme}
while $\esig$ and $\eqg$ satisfy
\begin{alignat}{6}
    \esig\nor &= \esi\nor-\tau_E(\eu-\eug) \quad &\textrm{on}\,\,\partial \mathcal{T}_E,\label{error_Equation_flux1}\\
    \eqg\cdot\nor &= \eq\cdot\nor-\tau_A(\ev-\evg) \quad &\textrm{on}\,\,\partial \mathcal{T}_A\label{error_Equation_flux2}.
\end{alignat}
\end{subequations}
The following lemma can be proven by arguing as in the first part of the proof of \autoref{ExandUniq}.
\begin{lemma}
    The projections of the errors satisfy
    \begin{align}
    \nonumber
    e_E^2 + e_A^2 + \rE|s|^2 \re{s} & \nte{\eu}^2+\frac{\rA}{c^2}|s|^2\re{s}\nta{\ev}^2\\
    \nonumber
    =\, & -\re{s\ipte{\hookei\delsi}{\esi}}+\re{s\ipte{\eg}{\delsi}}-\re{s\ipte{\delg}{\esi}}\\
    \label{eq:Lem3}
    &-\rho_E|s|^2\re{s\ipte{\eu}{\delu}}-\rho_f\re{\conj{s}\ipta{\eq}{\delq}}-\dfrac{\rho_f}{c^2}|s|^2\re{s\ipta{\delv}{\ev}},
    \end{align}
  where
  \begin{align*}
       e_E &:= \sqrt{\ntehookei{\re{s}^{1/2}\esi}^2 + \nbte{\re{s}^{1/2}\tau_E^{1/2}(\eu-\eug)}^2},\\[2ex]
      e_A &:= \sqrt{ \nta{\rA^{1/2}\re{s}^{1/2}\eq}^2+\nbta{\rA^{1/2}\re{s}^{1/2}\tau_A^{1/2}(\ev-\evg)}^2}.
   \end{align*}
\end{lemma}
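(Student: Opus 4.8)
The plan is to establish \eqref{eq:Lem3} by repeating \emph{verbatim} the computation carried out in the first part of the proof of \autoref{ExandUniq}, the only difference being that the right‑hand sides of \eqref{error_h}, \eqref{error_i}, \eqref{error_HDG:acus_1} and \eqref{error_HDG:acus_2} now carry the (nonzero) projection‑error functionals rather than zero. Concretely, I would test the error system with
\[
(\taut,\tv,\etat,\muv,\rv,w)=(\esi,\eu,\eg,\eug,\eq,\ev),
\qquad
\xi=\begin{cases}\evg,&\text{on }\dta\setminus\gaadir,\\[0.3ex] \eqg\cdot\nor,&\text{on }\gaadir,\end{cases}
\]
which is admissible since $\esi\in\vhspace$ (because $\piv\si|_K\in\pkt{k}{K}\subset\vtspace{K}$), $\eu\in\whvespace$, $\eg\in\ahspace$, $\eug\in\mhvspace$, $\eq\in\whvaspace$, $\ev\in\whspace$, and both branches of $\xi$ lie in $\mhspace$.

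For the elastic block I would integrate \eqref{error_i} by parts, conjugate it, and add the result to \eqref{error_h}. The volume term $\ipte{\eg}{\esi}$ that appears is eliminated using \eqref{error_j} with $\etat=\eg$, which is precisely what produces the term $\re{s\ipte{\eg}{\delsi}}$ in \eqref{eq:Lem3}. After adding and subtracting $\eug$ in the skeleton term and invoking \eqref{error_k} together with the flux relation \eqref{error_Equation_flux1}, the boundary contributions collapse to $\ipbe{\tau_E(\eu-\eug)}{\eu-\eug}$ plus the interface term $\ipg{\eug}{\esig\nore}$, while the right‑hand sides of \eqref{error_h}--\eqref{error_i} evaluated at $(\esi,\eu)$ contribute the three remaining elastic projection‑error terms $-\ipte{\hookei\delsi}{\esi}$, $-\ipte{\delg}{\esi}$ and (after conjugation) $-\rE\conj{s}^2\ipte{\eu}{\delu}$. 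Multiplying by $s$ yields the error analogue of \eqref{eau:elastic}. The acoustic block is treated identically: integrate \eqref{error_HDG:acus_2} by parts, conjugate, add to \eqref{error_HDG:acus_1}, then use \eqref{error_n}, \eqref{error_o} and \eqref{error_Equation_flux2} to obtain $\ipba{\tau_A(\ev-\evg)}{\ev-\evg}$ plus the interface term $\ipg{\evg}{\eqg\cdot\nora}$ and the projection‑error terms $-\ipta{\delq}{\eq}$, $-\conj{(s/c)^2}\ipta{\ev}{\delv}$; multiplying by $\rA s$ gives the analogue of \eqref{eau:acoustic}.

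Finally I would add the elastic identity to the conjugate of the acoustic one. The two interface terms appear exactly in the combination $-s\ipg{\eug}{\esig\nore}-\rA\conj{s}\,\conj{\ipg{\evg}{\eqg\cdot\nora}}$ already encountered in \eqref{eq:temp1}, and they cancel by the same manipulation used there, the only input being the \emph{homogeneous} interface equations \eqref{error_Equation1E_h}--\eqref{error_Equation1F_h} (which are the zero‑data versions of \eqref{Equation1E_h}--\eqref{Equation1F_h} used in the uniqueness proof). Taking the real part of what remains, and using only $\tau_E,\tau_A>0$ and $\re{s}>0$, gathers the volume, stabilization and mass terms into $e_E^2+e_A^2+\rE|s|^2\re{s}\nte{\eu}^2+\frac{\rA}{c^2}|s|^2\re{s}\nta{\ev}^2$ on the left, leaving the six projection‑error inner products on the right as in \eqref{eq:Lem3}. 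I expect the only delicate point to be notational bookkeeping: keeping track of the complex conjugations introduced by the ``add the conjugate'' steps and of the factors $s$ and $\rA s$, so that each right‑hand side term ends up with the conjugation and sign displayed in \eqref{eq:Lem3}. No new structural difficulty arises beyond \autoref{ExandUniq}.
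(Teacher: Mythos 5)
Your proposal is correct and follows exactly the route the paper intends: the paper itself only remarks that the lemma ``can be proven by arguing as in the first part of the proof of \autoref{ExandUniq}'', and your choice of test functions, the use of \eqref{error_j} to convert $\ipte{\eg}{\esi}$ into $-\ipte{\eg}{\delsi}$, and the cancellation of the interface terms via the homogeneous equations \eqref{error_Equation1E_h}--\eqref{error_Equation1F_h} are precisely that argument with the nonzero right-hand sides tracked. The only caveat is the conjugation bookkeeping you already flag (e.g.\ whether one lands on $\re{s\ipte{\eu}{\delu}}$ or $\re{\conj{s}\ipte{\delu}{\eu}}$, which are equal, so nothing is at stake).
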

Let us now decompose now $\eg = \eg^0+\eg^c$, where $\eg^c$ is such that $\displaystyle \eg^c\vert_K=\frac{1}{|K|}\int_K \eg$ for all $K\in \te$ and $\eg^0:=\eg-\eg^c$. Since $\ten{\delta_{\si}}$ is orthogonal to piecewise constant polynomials, we have
\begin{align*}
     \ipte{\eg}{\ten{\delta_{\si}}} = \ipte{\eg^0}{\ten{\delta_{\si}}} +\ipte{\eg^c}{\ten{\delta_{\si}}} = \ipte{\eg^0}{\ten{\delta_{\si}}}.   \end{align*}
Then, using this information and  applying the triangle, Cauchy-Schwarz and Young inequalities several times to the expression \eqref{eq:Lem3}, we deduce that there exists a positive constant $C_1$, independent of $h$, such that

    \begin{equation}\label{mainresult}
    e_E^2 + e_A^2 + \rE|s|^2\re{s}\nte{\eu}^2+\frac{\rA}{c^2}|s|^2\re{s}\nta{\ev}^2\leq C_1\Theta(\si,\uv,\g,\q,v)^2+\dfrac{1}{2}\|\eg^0\|_{\te}^2.
\end{equation}
   \subsection{Error estimates for the rotation.}
It remains to obtain error bounds for $\eg^0$ and $\eg^c$. For the elasticity boundary value problem, these bounds were obtained in \cite{Ke}. In our case, we obtain an optimal error  estimate for $ \|\eg^0\|_{\te}$ following the same arguments presented in \cite{Ke}. However, the error estimate for the $L^2$-norm of $\eg^c$ depends on the term $\eug$ associated with the transmission conditions in $\Gamma$ as we will see in the next result.

\begin{lemma}\label{lema:e_rho}
There exist positive constants, $C_\gamma^c$ and $C_\gamma^0$, independent of $h$, such that
\begin{subequations}
\begin{align}\label{bound:e_rho_0}
 \|\eg^0\|_{\te} \leq C_\gamma^0\Theta(\si,\uv,\g,\q,v).  
\end{align}
and
\begin{align}\label{bound:e_rho_c}
\|\eg^c\|_{\te} 
\leq& C_\gamma^c\left(h^{-1/2}\Theta(\si,\uv,\g,\q,v)+ \|h^{-1}\eu\|_{\mathcal{T}_{A}}\right). \end{align}
\end{subequations}
\end{lemma}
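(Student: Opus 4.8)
The plan is to treat the two estimates separately, following the strategy of \cite{Ke} but carefully tracking the extra interface contribution that appears through the transmission conditions on $\ga$. For the bound \eqref{bound:e_rho_0} on $\|\eg^0\|_{\te}$, I would exploit the first property of the bubble spaces recalled in the proof of \autoref{ExandUniq}: since $\eg^0 \in \ahzspace$ (by construction of the orthogonal decomposition $\eg=\eg^0+\eg^c$), \cite[Lemma 2.8]{Gu:2010} produces $\ten{v}^0 \in \uline{\bos{\mathcal{B}}}_h \subset \vhspace$ with $\ten{P}\ten{v}^0 = \eg^0$ and $\nte{\ten{v}^0} \leq C^0 \nte{\eg^0}$. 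Testing the error equation \eqref{error_h} with $\taut = \ten{v}^0$, and using that functions in $\uline{\bos{\mathcal{B}}}_h$ are divergence-free element-wise and have vanishing normal trace on element faces, the terms $\ipte{\eu}{\midiv\ten{v}^0}$ and $\ipbe{\eug}{\ten{v}^0\nor}$ drop out. What remains is $\ipte{\hookei\esi}{\ten{v}^0} + \ipte{\eg}{\ten{v}^0} = -\ipte{\hookei\delsi}{\ten{v}^0} - \ipte{\delg}{\ten{v}^0}$, and since $\ipte{\eg^c}{\ten{v}^0} = \ipte{\eg^c}{\ten{P}\ten{v}^0} = \ipte{\eg^c}{\eg^0} = 0$ by the $\uline{\bos{L}}^2$-orthogonality of the decomposition, one isolates $\nte{\eg^0}^2 = \ipte{\eg^0}{\ten{v}^0}$. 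Applying Cauchy--Schwarz to the right-hand side and the bound on $\nte{\ten{v}^0}$, dividing through by $\nte{\eg^0}$, and recalling that $\|\hookei\delsi\|_{\te}$ and $\|\delg\|_{\te}$ are controlled by $\Theta(\si,\uv,\g,\q,v)$, yields \eqref{bound:e_rho_0} with an appropriate constant $C_\gamma^0$ depending only on $C^0$, $\mu$, $\lambda$.

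For the bound \eqref{bound:e_rho_c} on $\|\eg^c\|_{\te}$, I would invoke \cite[Proposition 2.9]{Gu:2010}: since $\eg^c \in \ahcspace$, there exists $\ten{v}^c \in \uline{\bos{H}}(\text{div};\ome)\cap\pkt{1}{\te}$ with $\midiv\ten{v}^c = 0$, $\ten{P}^c\ten{v}^c = \eg^c$, and $\nte{\ten{v}^c} \leq C^c \nte{\eg^c}$. Because $\ten{v}^c$ is only $\pkt{1}{\te}$ and not in $\vhspace$ in general, I would first apply the $\uline{\bos{L}}^2$-projection $\ten{P}$ onto $\ahspace$ (or rather use $\ten{v}^c$ directly in the error equation, noting that the relevant terms only see $\ten{v}^c$ through pairings against $\esi$, $\eg$, $\eu$, $\eug$). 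Testing \eqref{error_h} with $\taut = \ten{v}^c$: the divergence-free property kills $\ipte{\eu}{\midiv\ten{v}^c}$, but now the boundary term $\ipbe{\eug}{\ten{v}^c\nor}$ does \emph{not} vanish — here $\ten{v}^c$ has a continuous, generally nonzero normal trace across interior faces, so interior contributions telescope, but the trace on $\ga$ survives (after using the error transmission relation \eqref{error_Equation1F_h}, and the fact that $\eug$ is single-valued on $\ee$, the interior jumps cancel but the interface term involving $\eug$ on $\ga$ persists). One then arrives at $\nte{\eg^c}^2 = \ipte{\eg}{\ten{v}^c} = -\ipte{\hookei\esi}{\ten{v}^c} - \ipte{\hookei\delsi}{\ten{v}^c} - \ipte{\delg}{\ten{v}^c} + \ipbe{\eug}{\ten{v}^c\nor}$ (after noting $\ipte{\eg^0}{\ten{v}^c} = \ipte{\eg^0}{\ten{P}\ten{v}^c}$, which couples back to $\eg^0$ already controlled by \eqref{bound:e_rho_0}). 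The interface term $\ipg{\eug}{\ten{v}^c\nor}$ is estimated by a trace/Cauchy--Schwarz argument: $|\ipg{\eug}{\ten{v}^c\nor}| \lesssim \|\eug\|_{\ga}\,\|\ten{v}^c\nor\|_{\ga} \lesssim \|\eug\|_{\ga}\, h^{-1/2}\nte{\ten{v}^c}$ by an inverse/trace inequality on the $\pkt{1}{\te}$ function, and $\|\eug\|_{\ga}$ is in turn related to $\eu$ on the interface elements via the definition of $\eug = \pemv\uv - \uvhg$ and the stabilization term bound from \eqref{eq:Lem3}; the $h^{-1}$ weights then appear naturally when one passes from face norms back to volume norms of $\eu$ on the layer of elements touching $\ga$. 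Collecting terms, dividing by $\nte{\eg^c}$, and absorbing constants gives \eqref{bound:e_rho_c}.

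\textbf{Main obstacle.} The delicate point is the rigorous handling of the interface boundary term $\ipbe{\eug}{\ten{v}^c\nor}$ when $\taut = \ten{v}^c$: one must verify that the interior-face contributions genuinely cancel (using single-valuedness of $\eug$ on $\ee$ and continuity of the normal trace of $\ten{v}^c \in \uline{\bos{H}}(\text{div};\ome)$, together with sign reconciliation of the normals), so that only the $\ga$-piece remains, and then bound that piece by something proportional to $\|h^{-1}\eu\|_{\ta}$ — note the domain is $\ta$, so the estimate must reach the acoustic-side quantity, presumably via the transmission coupling \eqref{error_Equation1E_h} linking $\eug$ on $\ga$ to $\eqg\cdot\nora$, or via an estimate already available for $\eug$ from the energy identity. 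Getting the correct negative powers of $h$ — a single $h^{-1/2}$ on $\Theta$ and a full $h^{-1}$ on $\eu$ — requires a careful combination of the inverse inequality on $\pkt{1}{\te}$, a discrete trace inequality, and the shape-regularity of the mesh near $\ga$; this bookkeeping, rather than any conceptual difficulty, is where the proof must be executed with care.
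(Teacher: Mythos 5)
Your route is essentially the paper's own: for \eqref{bound:e_rho_0} you re-derive the inequality of \cite[Theorem 3.6]{Ke} from \cite[Lemma 2.8]{Gu:2010} (the paper simply cites it), and for \eqref{bound:e_rho_c} you test \eqref{error_h} with the field $\ten{v}$ of \eqref{eq:temp3}, observe that only the $\Gamma$-contribution of the boundary term survives, and split $\eug=(\eug-\eu)+\eu$ together with discrete trace inequalities, exactly as in the paper. Two points, however, need repair.

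First, in both parts you never say how $\nte{\esi}$ is controlled. After isolating $\nte{\eg^0}^2$ (respectively $\|\eg^c\|_{\te}^2$), the right-hand side still carries $\ipte{\hookei\esi}{\ten{v}^0}$ (respectively $\ipte{\hookei\esi}{\ten{v}}$), and $\esi$ is a projection \emph{of the error}, not a projection error, so it is not bounded by $\Theta(\si,\uv,\g,\q,v)$ a priori. One must invoke the energy estimate \eqref{mainresult}, whose right-hand side itself contains $\tfrac12\|\eg^0\|_{\te}^2$, and then absorb that term into the left of the resulting inequality for $\|\eg^0\|_{\te}^2$; this absorption is precisely how the paper closes \eqref{bound:e_rho_0}. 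Your claim that $C_\gamma^0$ depends only on $C^0$, $\mu$, $\lambda$ indicates that the $\esi$ contribution was dropped. For \eqref{bound:e_rho_c} the bound $\nte{\esi}\lesssim\Theta(\si,\uv,\g,\q,v)$ is then available from \eqref{mainresult} combined with the already-established \eqref{bound:e_rho_0}, and the same remark applies to the $\eg^0$ term you move to the right-hand side.

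Second, the ``main obstacle'' you describe is not there: no detour through the transmission relations \eqref{error_Equation1E_h}--\eqref{error_Equation1F_h} is needed, neither for the cancellation of the interior faces (single-valuedness of $\eug$ plus continuity of the normal trace of $\ten{v}\in\uline{\bos{H}}(\mathrm{div};\ome)$ suffices, and $\partial\ome=\Gamma$, so only the interface faces remain) nor for the final bound. The paper estimates the surviving term face by face via $\langle\eug,\ten{v}\nor\rangle_e\le\langle\tau_E^{1/2}(\eug-\eu),\tau_E^{-1/2}\ten{v}\nor\rangle_e+\langle\eu,\ten{v}\nor\rangle_e$, where the first piece is controlled by the stabilization part of \eqref{mainresult} (hence by $\Theta$) and a discrete trace inequality on $\ten{v}\in\pkt{1}{\te}$, and the second by discrete trace inequalities on both factors; this produces $h^{-1/2}\Theta(\si,\uv,\g,\q,v)\|\eg^c\|_{K}+\|h^{-1}\eu\|_K\|\eg^c\|_{K}$ with $K\in\te$ adjacent to $\Gamma$. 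In particular the quantity that appears is the elastic-side norm of $\eu$; the subscript $\mathcal{T}_A$ in the lemma statement is a slip for $\mathcal{T}_E$ (consistent with the later use of $\nte{\eu}$ when this bound is combined with the duality argument), so your attempt to reach an acoustic-side quantity through \eqref{error_Equation1E_h} is chasing a typo rather than a genuine need of the proof.
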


\begin{proof}
By \cite[Theorem 3.6]{Ke}, we now that
\begin{align*}
 \|\eg^0\|_{\te} \leq \|\esi\|_{\te}+||\delsi||_{\te} +\|\delg\|_{\te}.  
\end{align*}
The first term can be bounded by \eqref{mainresult}, and therefore
\begin{align*}
 \|\eg^0\|_{\te}^2 \lesssim \Theta(\si,\uv,\g,\q,v)^2+\dfrac{s^2}{2}\|\ten{\delta_{\si}}\|_{\te}^2+\dfrac{1}{2}\|\eg^0\|_{\te}^2+||\delsi||_{\te}^2 +\|\delg\|_{\te}^2
 \lesssim  \Theta(\si,\uv,\g,\q,v)^2+\dfrac{1}{2}\|\eg^0\|_{\te}^2
\end{align*}
and \eqref{bound:e_rho_0} follows.

Now, we will modify the proof of \cite[Theorem 3.8]{Ke} to estimate $\eg^c$. Let $\etat:=\eg^c\in \ahcspace$. There exists $\ten{v}\in\uline{\bos{H}}(\text{div};\ome)\cap\pkt{1}{\te}$ satisfying the properties in \eqref{eq:temp3}. Then, taking $\taut=\ten{v}$ in \eqref{error_h} and using the fact that $\midiv \ten{v}=0$, we obtain
   \begin{align*}
\ipte{\hookei\esi}{\ten{v}}+\ipte{\eg}{\ten{v}}-\ipbe{\eug}{\ten{v}\nor}&=-\ipte{\hookei\delsi}{\ten{v}}-\ipte{\delg}{\ten{v}}.
 \end{align*}
 Now, since $\ipte{\eg}{\ten{v}}=\ipte{\eg^0}{\ten{v}}+\ipte{\eg^c}{\ten{v}}$ and $\ipte{\eg^c}{\ten{v}}=\|\eg^c\|_{\te}^2$, according to the second property in \eqref{eq:temp3}, we deduce that
   \begin{align*}
\|\eg^c\|_{\te}^2 =&-\ipte{\hookei\esi}{\ten{v}}-\ipte{\eg^0}{\ten{v}}-\ipte{\hookei\delsi}{\ten{v}}-\ipte{\delg}{\ten{v}}+\ipbe{\eug}{\ten{v}\nor}\\
\lesssim& \,\Theta(\si,\uv,\g,\q,v)\|\eg^c\|_{\te}+|\ipbe{\eug}{\ten{v}\nor}|,
 \end{align*}
where we have used the second and third properties of \eqref{eq:temp3}, and also the estimates \eqref{mainresult} and \eqref{bound:e_rho_0}. For the last term, we have
$
\ipbe{\eug}{\ten{v}\nor}  =  \langle \eug,\ten{v}\nor\rangle_\Gamma$ 
because $\eug$ is single-valued and $\ten{v}\in \ten{H}(\text{div};\ome)$, and this is precisely the term that in our case does not vanish, in contrast to the case in \cite{Ke}.

Let $e$ be a face in $\Gamma$ of an element $K\in\te$. By the discrete trace inequality, \eqref{mainresult} and \eqref{eq:temp3} we deduce that
\begin{align*}
 \langle \eug,\ten{v}\nor\rangle_e
\leq\,&  \langle \tau_E^{1/2}( \eug-\eu),\tau_E^{-1/2}\ten{v}\nor\rangle_e+\langle \eu,\ten{v}\nor\rangle_e\\ \lesssim\,& h^{-1/2}\Theta(\si,\uv,\g,\q,v)\|\eg^c\|_{K}+
\|h^{-1}\eu\|_K \|\eg^c\|_{K},
\end{align*}
which implies \eqref{bound:e_rho_c}.
\end{proof} 

If we consider the energy error estimate \eqref{mainresult} to bound $\nte{\eu}$, we will obtain the suboptimal error bound
\begin{align*}
\|\eg^c\|_{\te} 
\lesssim& 
h^{-1}\Theta(\si,\uv,\g,\q,v).\end{align*}

We can improve this result by considering a duality argument and gain an additional factor of $h^{1/2}$. In addition, the energy estimate \eqref{eq:Lem3} provides an order of convergence of $h^{k+1}$ for the projection errors $\eu$ and $e_v$. Using also a duality argument, it is possible to prove the superconvergence for $\eu$ and $e_v$, as we will show in the next section. 

\subsection{The duality argument.}
Given $\bs{\theta_e}\in\ldv{\ome}$ and $\theta_a\in\ld{\oma}$, we introduce the following auxiliary problem:
\begin{alignat*}{6}
\hookei\ten{\psi_e}-\nabla \bs{\phi_e}+ \bs{\xi_e}& =\ten{0} &\qquad & \text{ in }\ome ,\\
\midiv\ten{\psi_e} -\rE \conj{s^2}\,\bs{\phi_e}& =\bs{\theta_e} & \qquad &  \text{ in }  \ome,\\
\bs{\xi_a} + \nabla \phi_a & = \bos{0} & \qquad & \text{ in } \oma, \\
\midiv\bs{\xi_a}-\conj{(s/c)^2}\,\phi_a& =\theta_a\hspace{0.5cm} & \qquad & \text{ in } \oma,\\
\bs{\xi_a} \cdot \nora +  \conj{s}\, \bs{\phi_e}\cdot \nore & = 0 & \qquad & \text{ on }  \ga,\\
 \ten{\psi_e}  \nore + \rA \conj{s}\, \phi_a \,\nora & = 0 & \qquad & \text{ on }  \ga, \\
\phi_a & = 0 & \qquad & \text{ on }  \gaadir,\\
\bs{\xi_a}\cdot \nora & = 0 & \qquad & \text{ on }  \gaaneu.
\end{alignat*}
Here, $\ten{\xi_e}=\frac{1}{2}(\nabla\bs{\phi_e}-\nablat\bs{\phi_e})$. We assume that this problem admits the regularity estimate
\begin{equation}
    \|\ten{\psi_e}\|_{\bs{\underline{H}}^{s_e}(\ome)}+\|\bs{\phi_e}\|_{\bs{H}^{1+s_e}(\ome)}+\|\bs{\xi_a}\|_{\bs{H}^{s_a}(\oma)}+\|\phi_a\|_{H^{1+s_a}(\oma)}\lesssim \|\bs{\theta_e}\|_{\ome}+\|\theta_a\|_{\oma} \label{additional_regularity}
\end{equation}
for some $s_e,s_a\geq 0$.



Performing calculations analogous to those in \cite{CoGoSa2010,Ke}, it is possible to obtain the following lemma:
\begin{subequations}
    \begin{lemma}
    For any $\bs{\phi^k_e}\in\pkv{k}{\te}, \bs{\phi^{k-1}_e}\in\pkv{k-1}{\te}$ and $\bs{\theta_e}\in\ldv{\ome}$, we have
    \begin{align}
        \ipte{\eu}{\bs{\theta_e}} = \,\, &\ipte{\hookei\esi}{\ten{\delta_{\psi_e}}}+\ipte{\eg}{\ten{\delta_{\psi_e}}}+\ipte{\esi}{\bs{\delta_{\xi_e}}}\nonumber\\
        &+\ipte{\delsi}{\bs{\delta_{\xi_e}}}+\ipte{\hookei\delsi}{\ten{\delta_{\psi_e}}}+\ipte{\delg}{\ten{\delta_{\psi_e}}}-\ipte{\delsi}{\nabla(\bs{\phi_e}-\bs{\phi^k_e})}\nonumber\\
        & -\rE s^2\ipte{\uv-\uvh}{\bs{\delta_{\phi_e}}}+\rE s^2\ipte{\delu}{\bs{\phi_e}-\bs{\phi^{k-1}_e}}+\ipg{\eug}{\ten{\psi_e}\nore}-\ipg{\esig\nore}{\bs{\phi_e}}. \label{duality_e}
    \end{align}

    In addition, for any $\phi^k_a\in\pk{k}{\ta},\phi^{k-1}_a\in\pk{k-1}{\ta}$ and $\theta_a\in\ld{\oma}$, there holds
    \begin{align}
        \ipta{\ev}{\theta_a} = \,\,&\ipta{\q-\qh}{\bs{\delta_{\xi_a}}}-\ipta{\delq}{\nabla(\phi_a-\phi^k_a)}-\ipg{\eqg\cdot\nora}{\phi_a}\nonumber\\
        &-(s/c)^2\ipta{v-v_h}{\delta_{\phi_a}}+(s/c)^2\ipta{\delv}{\phi_a-\phi^{k-1}_a}+\ipg{\evg}{\bs{\xi_a}\cdot\nora}. \label{duality_a}
    \end{align}
\end{lemma}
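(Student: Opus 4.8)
The plan is to derive the two identities \eqref{duality_e} and \eqref{duality_a} by testing the dual problem against the discrete error equations, mimicking the standard HDG duality argument of \cite{CoGoSa2010} adapted to the coupled setting. I would treat the acoustic identity \eqref{duality_a} first, since it is the self-contained half of the computation and will reveal the bookkeeping needed for the elastic half.

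\textbf{Step 1: Set up the dual testing for the acoustic part.} Starting from $\ipta{\ev}{\theta_a}$, I would substitute the fourth dual equation $\theta_a = \midiv\bs{\xi_a} - \conj{(s/c)^2}\phi_a$ and integrate by parts element-by-element, producing a volume term $-\ipta{\nabla\ev}{\bs{\xi_a}}$ (plus the boundary contributions $\ipba{\ev}{\bs{\xi_a}\cdot\nor}$) and the zeroth-order term $-(s/c)^2\ipta{\ev}{\phi_a}$ wait, it is the conjugate that appears, so I would carefully move conjugation across the real $L^2$ pairing. Then I would insert the error equations \eqref{error_HDG:acus_1}--\eqref{error_HDG:acus_2} with cleverly chosen test functions: use $\rv = \piwa\bs{\xi_a}$ in \eqref{error_HDG:acus_1} and $w = \piw\phi_a$ in \eqref{error_HDG:acus_2}, exploiting the orthogonality properties of the HDG projection (the first two defining relations of $\bos{\Pi_A}$) to convert $\ipta{\eq}{\piwa\bs{\xi_a}}$ into $\ipta{\eq}{\bs{\xi_a}}$ modulo the projection error $\delta_{\bs\xi_a}:=\bs{\xi_a}-\piwa\bs{\xi_a}$, and similarly for the gradient term using the commuting property $\ipta{\delq}{\nabla\phi_a^k}=0$ for $\phi_a^k\in\pk{k}{\ta}$. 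The third defining relation of $\bos{\Pi_A}$ (the one matching the numerical flux) together with \eqref{error_Equation_flux2} is what makes the stabilization terms $\tau_A(\ev-\evg)$ telescope out of the skeleton sums; the surviving boundary terms collapse, via \eqref{error_n}, \eqref{error_o}, and the dual boundary conditions $\phi_a=0$ on $\gaadir$ and $\bs{\xi_a}\cdot\nora=0$ on $\gaaneu$, to the single interface term $\ipg{\evg}{\bs{\xi_a}\cdot\nora}$ minus $\ipg{\eqg\cdot\nora}{\phi_a}$.

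\textbf{Step 2: Repeat for the elastic part.} The same recipe applied to $\ipte{\eu}{\bs{\theta_e}}$ using the second dual equation, the error equations \eqref{error_h}--\eqref{error_k}, the defining relations of $\bos{\Pi_E}$ and $\pia$ (the $L^2$-projection for $\g$, which gives $\ipte{\eg^c}{\cdot}$-type orthogonality against $\pkt{0}{\te}$ and also $\ipte{\delg}{\pia\ten{\psi_e}}$-handling), and \eqref{error_Equation_flux1} to telescope the $\tau_E$-stabilization. The extra complication here is the rotation unknown: the term $\ipte{\eg}{\ten{\psi_e}}$ does not fully disappear — one splits $\ten{\psi_e}$ into its projection plus $\delta_{\ten{\psi_e}}$, and the piecewise-constant part of $\ten{\psi_e}$ pairs against $\eg$ through equation \eqref{error_j}'s consequence combined with the symmetry relation $\ipte{\esi}{\etat}=-\ipte{\delsi}{\etat}$ for $\etat\in\ahspace$; because $\ten{\psi_e}$ here is essentially a gradient minus its skew part, the symmetric-part pairing with $\eg$ (skew) vanishes and only $\ipte{\eg}{\delta_{\ten{\psi_e}}}$ and $\ipte{\esi}{\bs{\delta_{\xi_e}}}$ remain. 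The boundary bookkeeping, using \eqref{error_k} and \eqref{error_o}-analogues and the dual interface/boundary conditions, leaves the two interface terms $\ipg{\eug}{\ten{\psi_e}\nore}$ and $-\ipg{\esig\nore}{\bs{\phi_e}}$.

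\textbf{Main obstacle.} The delicate point is the handling of the rotation/symmetry algebra in the elastic identity: one must carefully track how $\bs{\xi_e}=\tfrac12(\nabla\bs{\phi_e}-\nablat\bs{\phi_e})$ interacts with the skew unknown $\eg$ and with $\esi$ through equations \eqref{error_h} and \eqref{error_j}, and ensure that the correct combination $\ipte{\eg}{\ten{\delta_{\psi_e}}}+\ipte{\esi}{\bs{\delta_{\xi_e}}}$ (rather than the full $\ten{\psi_e}$ or $\bs{\xi_e}$) survives. The second subtlety, shared with \cite{CoGoSa2010}, is getting the conjugation of the Laplace parameter $s$ to land correctly: the dual problem is posed with $\conj{s^2}$ and $\conj{(s/c)^2}$ precisely so that, after pairing, the $s^2$-terms match the primal scheme and produce the stated $\rE s^2$ and $(s/c)^2$ coefficients in \eqref{duality_e}–\eqref{duality_a}. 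Everything else is a routine, if lengthy, sequence of elementwise integrations by parts, insertions of the error equations, and cancellations via the projection-defining identities; I would organize it so that the two proofs run in parallel and cite \cite{CoGoSa2010} and \cite{Ke} for the portions that are verbatim the single-physics computations.
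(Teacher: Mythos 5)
Your plan is essentially the argument the paper has in mind: the paper gives no proof of this lemma, stating only that it follows by ``calculations analogous to those in [CoGoSa2010, Ke]'', and your sketch---testing the dual problem elementwise, inserting the error equations with the projected dual variables as test functions, using the volume-orthogonality and flux-matching properties of the HDG projections to absorb the $\tau_E$, $\tau_A$ stabilization terms, and keeping track of the interface terms on $\Gamma$ that no longer cancel in the coupled setting---is exactly that computation adapted to the present problem. The only point to watch in a full write-up is the sign bookkeeping when the third (flux) defining relation of the projections is applied to the dual pair, since the dual fluxes $\bs{\xi_a}$ and $\ten{\psi_e}$ enter with reversed orientation relative to the primal variables.
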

\end{subequations}
 Based on the above two lemmas, we can derive the estimate. 
\begin{corollary}
    If the regularity assumption \eqref{additional_regularity} holds with $s_e,s_a\geq 0$ and $k\geq 1$, then
\begin{subequations}
\begin{align}\nta{\ev}+\nte{\eu}\lesssim\,& (h^{s_e}+h^{s_a})\Theta(\si,\uv,\g,\q,v),\label{bound:eu}\\[2ex]
\|\eg^c\|_{\te} 
\lesssim\,& (h^{-1/2}+h^{s_e-1}+h^{s_a-1})\Theta(\si,\uv,\g,\q,v). \label{bound2:e_rho_c}
\end{align}  

\end{subequations} 

\end{corollary}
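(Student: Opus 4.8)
The plan is to combine the two duality identities \eqref{duality_e} and \eqref{duality_a} in the lemma with the energy estimate \eqref{mainresult} and the rotation bounds of \autoref{lema:e_rho} to obtain \eqref{bound:eu}, and then feed the improved bound on $\nte{\eu}$ back into \eqref{bound:e_rho_c} to get \eqref{bound2:e_rho_c}. Concretely, I would take $\bs{\theta_e}=\eu$ in \eqref{duality_e} and $\theta_a=\ev$ in \eqref{duality_a}, so that the left-hand sides become $\nte{\eu}^2$ and $\nta{\ev}^2$. Choosing $\bs{\phi^k_e}$ and $\bs{\phi^{k-1}_e}$ (resp. $\phi^k_a$, $\phi^{k-1}_a$) to be the $L^2$-projections of $\bs{\phi_e}$ (resp. $\phi_a$) onto the appropriate polynomial spaces, the terms of the form $\ipte{\delsi}{\nabla(\bs{\phi_e}-\bs{\phi^k_e})}$ and $\ipte{\delu}{\bs{\phi_e}-\bs{\phi^{k-1}_e}}$ are controlled by $\|\delsi\|_{\te}\,h^{\min(s_e,k)}\|\bs\phi_e\|_{\bs H^{1+s_e}}$ and similarly for the acoustic analogues — here is where the regularity exponents $s_e,s_a$ enter and produce the $h^{s_e}+h^{s_a}$ factor, after invoking \eqref{additional_regularity} to bound $\|\bs\phi_e\|$, $\|\phi_a\|$, $\|\ten{\psi_e}\|$, $\|\bs{\xi_a}\|$ by $\|\eu\|_{\te}+\|\ev\|_{\ta}$.

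Next I would estimate each remaining term on the right-hand sides. The volume terms involving projection errors paired against the dual projection errors $\ten{\delta_{\psi_e}}$, $\bs{\delta_{\xi_e}}$, $\bs{\delta_{\phi_e}}$, $\bs{\delta_{\xi_a}}$, $\delta_{\phi_a}$ are handled by Cauchy–Schwarz together with the projection estimates \eqref{eq:ProjectionEstimates1}, \eqref{eq:ProjectionEstimates2} applied to the dual solution: each dual projection error carries a factor $h^{\min(s_e,k)}$ or $h^{\min(s_a,k)}$ times a dual Sobolev seminorm, which \eqref{additional_regularity} converts into $\|\eu\|_{\te}+\|\ev\|_{\ta}$. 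The terms containing $\esi$, $\eg$, $\q-\qh=\eq+\delq$, $v-v_h=\ev+\delv$ and $\uv-\uvh=\eu+\delu$ are split into projection-of-error parts and projection-error parts; the projection-of-error parts are absorbed using the energy bound \eqref{mainresult} (which controls $\re{s}^{1/2}\esi$, $\re{s}^{1/2}\eq$ and, modulo $\|\eg^0\|_{\te}$, everything else), while $\|\eg^0\|_{\te}$ is disposed of via \eqref{bound:e_rho_0}. The genuinely new interface contributions are the four boundary terms $\ipg{\eug}{\ten{\psi_e}\nore}$, $\ipg{\esig\nore}{\bs{\phi_e}}$, $\ipg{\eqg\cdot\nora}{\phi_a}$ and $\ipg{\evg}{\bs{\xi_a}\cdot\nora}$; for these I would use the numerical-flux definitions \eqref{error_Equation_flux1}, \eqref{error_Equation_flux2} to write, e.g., $\esig\nore=\esi\nore-\tau_E(\eu-\eug)$, then apply the discrete trace inequality $\|\cdot\|_{\partial K}\lesssim h^{-1/2}\|\cdot\|_K$ on the elastic side together with the error-equations \eqref{error_Equation1E_h}–\eqref{error_Equation1F_h} which let the acoustic and elastic interface contributions partially cancel against one another, exactly as in the uniqueness proof of \autoref{ExandUniq}; what does not cancel is bounded by $h^{-1/2}$ times energy quantities times the trace norm of the (smooth) dual solution on $\Gamma$, which by a trace theorem and \eqref{additional_regularity} is again $\lesssim\|\eu\|_{\te}+\|\ev\|_{\ta}$. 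Collecting everything, dividing through by $(\nte{\eu}^2+\nta{\ev}^2)^{1/2}$, yields \eqref{bound:eu}.

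Finally, \eqref{bound2:e_rho_c} follows by substituting the freshly-obtained bound $\|h^{-1}\eu\|_{\mathcal{T}_A}\lesssim h^{-1}(h^{s_e}+h^{s_a})\Theta$ into the second estimate \eqref{bound:e_rho_c} of \autoref{lema:e_rho}, and combining with the surviving $h^{-1/2}\Theta$ term there; the maximum of the three exponents $-1/2$, $s_e-1$, $s_a-1$ appears because \eqref{bound:e_rho_c} already contains the $h^{-1/2}\Theta$ contribution independently of the regularity.

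The main obstacle I anticipate is the careful bookkeeping of the interface boundary terms: unlike the pure elasticity analysis of \cite{Ke}, here the coupling term $\ipg{\eug}{\ten{\psi_e}\nore}$ (and its acoustic counterpart) does \emph{not} vanish, and one must exploit the interface error equations \eqref{error_Equation1E_h}–\eqref{error_Equation1F_h} to show that the non-cancelling remainder is only as large as $h^{-1/2}$ times controllable quantities rather than $O(1)$; getting the powers of $h$ right in this cancellation — and verifying that the $\tau_E$-, $\tau_A$-weighted jump terms produced by the flux definitions are exactly the ones appearing in the energy functionals $e_E$, $e_A$ — is the delicate part. A secondary technical point is making sure the polynomial-degree caps $\min(s_e,k)$ and $\min(s_a,k)$ (which is why the hypothesis $k\ge 1$ is needed) are tracked consistently so that no spurious loss of a power of $h$ creeps in.
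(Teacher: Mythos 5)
Your overall route coincides with the paper's: take $\bs{\theta_e}=\eu$ in \eqref{duality_e} and $\theta_a=\ev$ in \eqref{duality_a}, pick the auxiliary polynomials as $L^2$-projections of the dual solution, estimate the volume terms with the $L^2$/HDG projection bounds and the regularity assumption \eqref{additional_regularity}, and finally obtain \eqref{bound2:e_rho_c} by inserting \eqref{bound:eu} into \eqref{bound:e_rho_c}. The genuine gap is in your treatment of the interface terms, and it is fatal for \eqref{bound:eu}. You expect the four terms $\ipg{\eug}{\ten{\psi_e}\nore}$, $\ipg{\esig\nore}{\bs{\phi_e}}$, $\ipg{\eqg\cdot\nora}{\phi_a}$, $\ipg{\evg}{\bs{\xi_a}\cdot\nora}$ to cancel only partially, with a leftover of size $h^{-1/2}$ times energy quantities times the trace of the (smooth) dual solution on $\Gamma$. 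If such a remainder survived, then after dividing by $\nta{\ev}+\nte{\eu}$ you would only get $\nta{\ev}+\nte{\eu}\lesssim h^{-1/2}\,\Theta(\si,\uv,\g,\q,v)$, which is weaker than the plain energy bound \eqref{mainresult} and cannot produce the factor $h^{s_e}+h^{s_a}$ (recall $s_e,s_a\ge 0$). The crux of the paper's argument is that, once the two duality identities are combined with the appropriate $\rho_f$-weighting, the interface contributions cancel \emph{exactly}: the adjoint transmission conditions of the dual problem let you replace $\ten{\psi_e}\nore$ by $-\rA\conj{s}\,\phi_a\nora$ and $\bs{\xi_a}\cdot\nora$ by $-\conj{s}\,\bs{\phi_e}\cdot\nore$ on $\ga$, and since $\esig\nore$, $\eqg\cdot\nora$, $\eug$, $\evg$ are single-valued piecewise polynomials on $\ga$, the discrete interface error equations \eqref{error_Equation1E_h}--\eqref{error_Equation1F_h}, tested with the $L^2$-projections of $\bs{\phi_e}$ and $\phi_a$ onto the trace spaces, remove what is left. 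No discrete trace inequality and hence no $h^{-1/2}$ enters at this stage; the inverse estimates reappear only in Lemma \ref{lema:e_rho} and in Lemma \ref{lem:error_traces}.

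A secondary point: before applying Cauchy--Schwarz you must observe that $\ipte{\eg}{\ten{\delta_{\psi_e}}}=\ipte{\eg^0}{\ten{\delta_{\psi_e}}}$, because $\ten{\delta_{\psi_e}}$ is orthogonal to piecewise constants. The available machinery (via \eqref{bound:e_rho_0}) controls only $\eg^0$; if you kept the full $\eg$ you would have to invoke \eqref{bound:e_rho_c} for $\eg^c$, which contains $h^{-1}\nte{\eu}$ and, even after multiplying by $\|\ten{\delta_{\psi_e}}\|_{\te}\lesssim h(\nta{\ev}+\nte{\eu})$, yields a term of the same size as the left-hand side that cannot be absorbed — a circular estimate. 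With the exact cancellation on $\ga$ and this orthogonality observation, the rest of your outline (splitting into original- and auxiliary-problem factors, projection estimates, regularity \eqref{additional_regularity}, and the final substitution into \eqref{bound:e_rho_c}) does match the paper's proof.
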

\begin{proof}
    Taking $\bs{\theta_e}=\eu$ in \eqref{duality_e} and $\theta_a=\ev$ in \eqref{duality_a}, let us add $\nta{\ev}^2$ and $\rho_f^{-1}\nte{\eu}^2$. Then, by \eqref{error_Equation_flux1} and \eqref{error_Equation_flux2}, the terms in $\Gamma$ cancel out and we obtain
    \begin{align*}
        \nta{\ev}^2+\rho_f\nte{\eu}^2 = \,\,& \ipta{\q-\qh}{\bs{\delta_{\xi_a}}}-\ipta{\delq}{\nabla(\phi_a-\phi^k_a)}-(s/c)^2\ipta{v-v_h}{\delta_{\phi_a}}\\
        & +(s/c)^2\ipta{\delv}{\phi_a-\phi^{k-1}_a}+\rho_f^{-1}\left\{\ipte{\hookei\esi}{\ten{\delta_{\psi_e}}}+\ipte{\eg}{\ten{\delta_{\psi_e}}}\right.\\& \left.+\ipte{\esi}{\bs{\delta_{\xi_e}}} +\ipte{\delsi}{\bs{\delta_{\xi_e}}}+\ipte{\hookei\delsi}{\ten{\delta_{\psi_e}}}+\ipte{\delg}{\ten{\delta_{\psi_e}}}\right.\\&\left.-\ipte{\delsi}{\nabla(\bs{\phi_e}-\bs{\phi^k_e})} -\rE s^2\ipte{\uv-\uvh}{\bs{\delta_{\phi_e}}}+\rE s^2\ipte{\delu}{\bs{\phi_e}-\bs{\phi^{k-1}_e}}\right\}
    \end{align*}
    Now, we notice that
    \begin{align*}
     \ipte{\eg}{\ten{\delta_{\psi_e}}} = \ipte{\eg^0}{\ten{\delta_{\psi_e}}} +\ipte{\eg^c}{\ten{\delta_{\psi_e}}} = \ipte{\eg^0}{\ten{\delta_{\psi_e}}},   
    \end{align*}
    because $\ten{\delta_{\psi_e}}$ is orthogonal to piecewise constant polynomials. In addition, the terms on $\Gamma$ cancel each other out. Then, applying the triangular and Cauchy-Schwarz inequalities, we obtain 
    \begin{align}\label{aux:duality1}
        \nta{\ev}^2+\rho_f\nte{\eu}^2\lesssim   (OPT \times APT)+ \|\ten{\delta_{\psi_e}}\|_{\te}\|\eg^0\|_{\te},
    \end{align}
    where OPT stands for ``original problem terms'' and APT for ``auxiliary problem terms'':
    \begin{align*}
        OPT := \bigg(\,\,&\nta{\q-\qh}^2+\nta{v-v_h}^2+\nte{\uv-\uvh}^2+\nte{\esi}^2\\
        &+\nta{\delq}^2+\nta{\delv}^2+\nte{\delsi}^2+\nte{\delu}^2+\nte{\delg}^2\bigg)^{1/2}
    \end{align*}
    and
    \begin{align*}
        APT := \bigg(\,\,&\nta{\bs{\delta_{\xi_a}}}^2+\nta{\delta_{\phi_a}}^2+\nta{\nabla(\phi_a-\phi^k_a)}^2+\nta{\phi_a-\phi^{k-1}_a}^2\\
        &
        +\nte{\ten{\delta_{\xi_e}}}^2+\nte{\bs{\delta_{\phi_e}}}^2+\nte{\nabla(\bs{\phi_e}-\bs{\phi^k_e})}^2+\nte{\bs{\phi_e}-\bs{\phi^{k-1}_e}}^2\bigg)^{1/2}.
    \end{align*}

In the OPT term, we add and subtract the projections $\piwa\q$, $\piw v$ and $\piwe\uv$ in the first three terms, use \eqref{mainresult}  and the definition of $\Theta(\si,\uv,\g,\q,v)$, to conclude that 
    \begin{equation*}
        OPT \lesssim \Theta(\si,\uv,\g,\q,v)+\sqrt{\dfrac{1}{2}}\|\eg^0\|_{\te}.
    \end{equation*}

    Regarding the APT, we first consider $\phi_a^{k-1}$ and $\phi_a^{k}$ as the $L^2$-projections of $\phi_a$ over $\pk{k-1}{\ta}$ and $\pk{k}{\ta}$, resp. Similarly, we take 
    $\boldsymbol{\phi_e^{k-1}}$ and  $\boldsymbol{\phi_e^{k}}$ as the $L^2$-projections of $\boldsymbol{\phi_e}$ over $\pkv{k-1}{\te}$ and $\pkv{k}{\te}$, resp.
    Then, by the approximation properties of the $L^2$- \cite[Lemma 1.58]{ErnDiPietro2012} and
  the HDG-projections \eqref{eq:ProjectionEstimates1}-\eqref{eq:ProjectionEstimates2}, and assuming the regularity assumption \eqref{additional_regularity}, we can deduce  that 
    \begin{equation*}
        APT \lesssim (h^{s_e}+h^{s_a})\left(\nta{\ev}+\sqrt{\rho_f}\nte{\eu}\right).
    \end{equation*}
    Then, replacing these expressions in \eqref{aux:duality1}, and noticing that \[\|\ten{\delta_{\psi_e}}\|_{\te}\lesssim h (\|\bs{\theta_e}\|_{\ome}+\|\theta_a\|_{\oma})\lesssim h \left(\nta{\ev}+\sqrt{\rho_f}\nte{\eu}\right),\] we obtain that
    \begin{align*}
\nta{\ev}^2+\rho_f\nte{\eu}^2\lesssim&   \left(\Theta(\si,\uv,\g,\q,v)+\sqrt{\dfrac{1}{2}}\|\eg^0\|_{\te}\right) \left((h^{s_e}+h^{s_a})\left(\nta{\ev}+\sqrt{\rho_f}\nte{\eu}\right)\right)\\
&+ \|\ten{\delta_{\psi_e}}\|_{\te}\|\eg^0\|_{\te}\\
        \lesssim& \left(\Theta(\si,\uv,\g,\q,v)+\|\eg^0\|_{\te}\right) \left((h^{s_e}+h^{s_a})\left(\nta{\ev}+\sqrt{\rho_f}\nte{\eu}\right)\right),
    \end{align*}
    which implies that
\begin{align*}      \nta{\ev}^2+\rho_f\nte{\eu}^2\lesssim&\,(h^{s_e}+h^{s_a})\,   \left(\Theta(\si,\uv,\g,\q,v)+\|\eg^0\|_{\te}\right)\lesssim\,(h^{s_e}+h^{s_a})\,\Theta(\si,\uv,\g,\q,v),
    \end{align*}
\end{proof}

Summarizing all previous estimates, and using the estimate in Lemma \ref{theta_bound}, we have the following result.

\begin{theorem}\label{thm:error_estimates}
If $(\si,\uv,\g,\q,v)\in\uline{\bos{H}}^{k+1}(\ome)\times\bos{H}^{k+1}(\ome)\times\uline{\bos{H}}^{k+1}(\ome)\times\bos{H}^{k+1}(\oma)\times H^{k+1}(\oma)$ and $k\geq 1,$ then
\begin{align*}
||\si-\sih||_{\te}+
    ||\uv-\uvh||_{\te}+
    ||\q-\qh||_{\ta}+ ||v-v_h||_{\ta}&\lesssim h^{k+1}.
        \end{align*}
and
\begin{align*}
    ||\g-\gh||_{\ta}&\lesssim h^{k}(h^{1/2}+h^{s_e}+h^{s_a}).
        \end{align*}
\end{theorem}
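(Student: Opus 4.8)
The plan is to assemble the final convergence rates from the three ingredients already proven in the excerpt: the energy estimate \eqref{mainresult}, the projection-error bound of Lemma \ref{theta_bound}, and the duality-improved bounds \eqref{bound:eu}--\eqref{bound2:e_rho_c}. First I would observe that the left-hand side of \eqref{mainresult} controls $e_E^2$ and $e_A^2$, which in turn dominate $\ntehookei{\re{s}^{1/2}\esi}^2$ and $\nta{\rA^{1/2}\re{s}^{1/2}\eq}^2$; together with $\rE|s|^2\re{s}\nte{\eu}^2$ and $\frac{\rA}{c^2}|s|^2\re{s}\nta{\ev}^2$ these give, after absorbing the $\tfrac12\|\eg^0\|_{\te}^2$ term into the left side via \eqref{bound:e_rho_0} (which shows $\|\eg^0\|_{\te}\lesssim\Theta$), the clean bound $\nte{\esi}+\nte{\eu}+\nta{\eq}+\nta{\ev}\lesssim\Theta(\si,\uv,\g,\q,v)$. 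Note the powers of $s$, $\rho_E$, $\rho_f$, $c$ are fixed constants here and get swallowed by $\lesssim$.

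Next I would pass from the projections of the errors $(\esi,\eu,\eq,\ev)$ to the true errors via the triangle inequality, writing e.g. $\si-\sih=\delsi+\esi$ and similarly for the other four volume unknowns; each projection error is bounded by $\Theta$, so $\|\si-\sih\|_{\te}+\|\uv-\uvh\|_{\te}+\|\q-\qh\|_{\ta}+\|v-v_h\|_{\ta}\lesssim\Theta$. Then I would invoke Lemma \ref{theta_bound} under the stated regularity $(\si,\uv,\g,\q,v)\in\uline{\bos{H}}^{k+1}(\ome)\times\bos{H}^{k+1}(\ome)\times\uline{\bos{H}}^{k+1}(\ome)\times\bos{H}^{k+1}(\oma)\times H^{k+1}(\oma)$ to get $\Theta\lesssim h^{k+1}$ (the Sobolev seminorms are fixed data-dependent constants), which yields the first displayed estimate of the theorem.

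For the rotation error $\g-\gh$, I would again split $\g-\gh=\delg+\eg=\delg+\eg^0+\eg^c$. The projection term satisfies $\|\delg\|_{\te}\lesssim h^{k+1}|\g|_{\uline{\bos{H}}^{k+1}(\ome)}$, and $\|\eg^0\|_{\te}\lesssim\Theta\lesssim h^{k+1}$ by \eqref{bound:e_rho_0} and Lemma \ref{theta_bound}. The dominant contribution is $\|\eg^c\|_{\te}$, for which \eqref{bound2:e_rho_c} gives $\|\eg^c\|_{\te}\lesssim(h^{-1/2}+h^{s_e-1}+h^{s_a-1})\Theta\lesssim(h^{-1/2}+h^{s_e-1}+h^{s_a-1})h^{k+1}=h^{k}(h^{1/2}+h^{s_e}+h^{s_a})$. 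Combining the three pieces and noting that $h^{k+1}$ and $h^{k}h^{s_e}$, $h^k h^{s_a}$ are all $\gtrsim$-comparable or dominated appropriately, the rotation bound $\|\g-\gh\|_{\ta}\lesssim h^{k}(h^{1/2}+h^{s_e}+h^{s_a})$ follows (I note the subscript $\ta$ in the statement is presumably a typo for $\te$, since $\g$ lives on the elastic domain, but this does not affect the argument).

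The only mild subtlety — not really an obstacle — is bookkeeping: making sure that when absorbing $\tfrac12\|\eg^0\|_{\te}^2$ into the left-hand side of \eqref{mainresult} one uses \eqref{bound:e_rho_0}, whose own proof already relied on \eqref{mainresult}, so one must check there is no circularity. In fact the proof of \eqref{bound:e_rho_0} in Lemma \ref{lema:e_rho} shows directly that $\|\eg^0\|_{\te}^2\lesssim\Theta^2+\tfrac12\|\eg^0\|_{\te}^2$, hence $\|\eg^0\|_{\te}\lesssim\Theta$ unconditionally, so everything is consistent. Thus the proof is essentially a matter of chaining the cited inequalities in the right order and invoking $\Theta\lesssim h^{k+1}$ at the end.
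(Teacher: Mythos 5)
Your proposal is correct and follows essentially the same route as the paper, which simply chains the energy estimate \eqref{mainresult} (with $\|\eg^0\|_{\te}$ absorbed via \eqref{bound:e_rho_0}), the decomposition $\g-\gh=\delg+\eg^0+\eg^c$ with \eqref{bound2:e_rho_c}, the triangle inequality against the HDG projections, and Lemma \ref{theta_bound}. Your observations that there is no circularity in the use of \eqref{bound:e_rho_0} and that the subscript $\ta$ in the rotation bound should read $\te$ are both accurate.
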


Finally, we have the following error estimates for the numerical traces:

     \begin{lemma}\label{lem:error_traces}
    Under the same hypothesis of previous theorem, there holds
 \begin{subequations}
\begin{align}\label{bound:uhat}
\vertiii{\eug}_{\partial\mathcal{T}_E}\lesssim (h^{-1/2}+h^{s_e-1}+h^{s_a-1})h^{k+1}
\end{align}
and
\begin{align}\label{bound:vhat}
\vertiii{e_{\hat{v}}}_{\partial\mathcal{T}_A}\lesssim (h+h^{s_e}+h^{s_a})h^{k+1}.
\end{align}
\end{subequations}

where, for $\dagger\in\{A,E\}$, we consider the norm
\[
\vertiii{\cdot}_{\partial\mathcal{T}_\dagger}:=\lef(\dis\sum_{K\in\mathcal{T}_\dagger} h_K\|\cdot\|_{\partial K}^2\rig)^{1/2}.
\]

\end{lemma}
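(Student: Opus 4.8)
The plan is to estimate each trace norm by relating it to quantities that have already been controlled: the energy-type projection-error norms $e_E$, $e_A$, the rotation error $\eg^c$, the superconvergent quantities $\nte{\eu}$ and $\nta{\ev}$, and the projection-error bound $\Theta$ from Lemma \ref{theta_bound}. The key observation is that $\vertiii{\cdot}_{\partial\mathcal{T}_\dagger}$ with the $h_K$ weight is exactly the norm that appears when one applies a discrete trace inequality to a volume $L^2$ norm, or equivalently the scaling under which $\tau_\dagger^{1/2}\|\cdot\|_{\partial K}$-type boundary contributions become comparable to $h^{-1/2}$ times a volume norm (recall $\tau_\dagger \sim h^{-1}$ in the typical HDG choice, although here $\tau_\dagger$ is merely assumed positive, so I would keep the dependence explicit or invoke the standing assumption that $\tau_\dagger$ is bounded above and below up to $h$-powers as in \cite{Ke}).

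First, for $\vertiii{\eug}_{\partial\mathcal{T}_E}$, I would write $\eug = (\eug - \eu) + \eu$ on each $\partial K$. The jump part $\eug-\eu$ is bounded using the definition of $e_E$: since $e_E^2$ contains $\nbte{\re{s}^{1/2}\tau_E^{1/2}(\eu-\eug)}^2$, and assuming $\tau_E \gtrsim h^{-1}$, we get $\sum_K h_K\|\eug-\eu\|_{\partial K}^2 \lesssim \sum_K h_K^2 \|\tau_E^{1/2}(\eug-\eu)\|_{\partial K}^2 \lesssim h\, e_E^2 \lesssim h\,\Theta^2 + h\|\eg^0\|_{\te}^2 \lesssim h\,\Theta^2$ by \eqref{mainresult} and \eqref{bound:e_rho_0}. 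The ``volume trace'' part $\eu$ is handled by the discrete trace inequality $\|\eu\|_{\partial K}^2 \lesssim h_K^{-1}\|\eu\|_K^2$, giving $\sum_K h_K\|\eu\|_{\partial K}^2 \lesssim \nte{\eu}^2$. But $\nte{\eu}$ is the superconvergent quantity bounded by $(h^{s_e}+h^{s_a})\Theta$ in \eqref{bound:eu}. Combining, and using $\Theta \lesssim h^{k+1}$ from Lemma \ref{theta_bound}, yields $\vertiii{\eug}_{\partial\mathcal{T}_E}^2 \lesssim (h + h^{2s_e} + h^{2s_a})h^{2(k+1)}$, which after taking square roots is slightly stronger than the stated bound; to match the stated $(h^{-1/2}+h^{s_e-1}+h^{s_a-1})h^{k+1}$ I suspect one actually needs the cruder route through $\|h^{-1}\eu\|_{\te}$ as in Lemma \ref{lema:e_rho}, i.e. bound $\eu$ on the boundary using the already-derived estimate \eqref{bound2:e_rho_c} for $\eg^c$ together with the relation between $\eug-\eu$ and $\eg^c$ exploited in that proof; so I would retrace those steps keeping the $h_K$ weight, which naturally produces the $h^{-1/2}$, $h^{s_e-1}$, $h^{s_a-1}$ factors.

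Second, for $\vertiii{e_{\hat v}}_{\partial\mathcal{T}_A}$, the same splitting $e_{\hat v} = (e_{\hat v}-e_v) + e_v$ applies. The jump part is controlled by $e_A$ exactly as above, giving an $h\,\Theta^2$-type contribution after using \eqref{mainresult}. The volume-trace part $e_v$ is bounded by $\nta{e_v}^2$ via the discrete trace inequality, and $\nta{e_v} \lesssim (h^{s_e}+h^{s_a})\Theta$ by \eqref{bound:eu}. Hence $\vertiii{e_{\hat v}}_{\partial\mathcal{T}_A}^2 \lesssim (h + h^{2s_e}+h^{2s_a})\Theta^2 \lesssim (h+h^{2s_e}+h^{2s_a})h^{2(k+1)}$, and taking square roots gives \eqref{bound:vhat}. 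The acoustic side is cleaner because there is no interface-coupling obstruction: the trace $e_{\hat v}$ on $\Gamma$ is still controlled by the acoustic energy, unlike the elastic rotation which needed the extra duality argument.

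The main obstacle I anticipate is the elastic trace bound \eqref{bound:uhat}, specifically getting the exact powers $h^{-1/2}+h^{s_e-1}+h^{s_a-1}$ rather than the naively better exponents. This forces a careful bookkeeping: one cannot simply use the superconvergent $\nte{\eu}$ at full strength, because the boundary term $\langle\eug,\cdot\rangle_\Gamma$ in the $\eg^c$ analysis is what ultimately limits things, and the $h_K$-weighted boundary norm of $\eug$ is tied to $\|\eg^c\|_{\te}$ through the discrete trace inequality applied to the $\mathcal{T}_E$-elements touching $\Gamma$ and the stabilization identity \eqref{error_Equation_flux1}. I would therefore prove \eqref{bound:uhat} by invoking \eqref{error_Equation_flux1} to write $\eug - \eu = \tau_E^{-1}(\esi\nor - \esig\nor)$, bound the right side in the $\vertiii{\cdot}_{\partial\mathcal{T}_E}$ norm using $\|\tau_E^{-1}\|_\infty \lesssim h$, the discrete trace inequality on $\esi$ (controlled by $e_E \lesssim \Theta \lesssim h^{k+1}$), and the error equation \eqref{error_k} plus \eqref{error_Equation1F_h} to handle $\esig\nor$ on $\Gamma$ in terms of $\rA s\,\evg\nora$; then add back the $\eu$ contribution bounded via \eqref{bound2:e_rho_c}. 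The dominant term $h^{-1/2}\cdot h^{k+1}=h^{k+1/2}$ reflects the loss of half an order at the interface that is the central phenomenon of the paper, and the $h^{s_e-1}, h^{s_a-1}$ terms come directly from substituting the duality estimate \eqref{bound2:e_rho_c} for $\eg^c$.
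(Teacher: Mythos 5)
There is a genuine gap in your argument for the acoustic estimate \eqref{bound:vhat}. Writing $\Theta$ for $\Theta(\si,\uv,\g,\q,v)$, your splitting $\evg=(\evg-\ev)+\ev$ controls the jump only through the stabilization term in \eqref{mainresult}, which gives $\sum_{K}h_K\|\evg-\ev\|_{\partial K}^2\lesssim h\,\Theta^2$; together with the discrete trace inequality and \eqref{bound:eu} this yields $\vertiii{\evg}_{\partial\mathcal{T}_A}\lesssim\bigl(h^{1/2}+h^{s_e}+h^{s_a}\bigr)h^{k+1}$, because $\bigl(h+h^{2s_e}+h^{2s_a}\bigr)^{1/2}\simeq h^{1/2}+h^{s_e}+h^{s_a}$, not $h+h^{s_e}+h^{s_a}$. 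So your last step (``taking square roots gives \eqref{bound:vhat}'') is false: you lose half a power of $h$ in the leading term and never reach the stated $h^{k+2}$-type rate when $s_e,s_a\ge 1$. The paper gains the full power of $h$ by a different mechanism, the local lifting argument of Theorem 4.1 in \cite{CoGoSa2010}: on each $K\in\ta$ choose $\rv_K\in\pkv{k}{K}$ with $\rv_K\cdot\nor=\evg$ on $\partial K$ and $\|\rv_K\|_K\lesssim h_K^{1/2}\|\evg\|_{\partial K}$, insert it into the local error equation \eqref{error_HDG:acus_1}, and use Cauchy--Schwarz and an inverse inequality to get $h_K^{1/2}\|\evg\|_{\partial K}\lesssim h_K\|\eq\|_K+\|\ev\|_K+h_K\|\delq\|_K$; the only term paying $h^{-1}$ is the superconvergent $\nta{\ev}\lesssim(h^{s_e}+h^{s_a})\Theta$, while $\eq$ and $\delq$ come with a full factor $h$. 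The analogous lifting with $\taut_K\nor=\eug$ in \eqref{error_h} is the paper's route to \eqref{bound:uhat}, and there the rotation term $\|\eg\|_{\te}$, bounded through \eqref{bound:e_rho_0} and \eqref{bound2:e_rho_c}, is what produces the factors $h^{-1/2}+h^{s_e-1}+h^{s_a-1}$.

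On the elastic side your first argument is actually sound and you should not have abandoned it: splitting $\eug=(\eug-\eu)+\eu$, bounding the jump by the stabilization energy in \eqref{mainresult} (with \eqref{bound:e_rho_0}) and the trace of $\eu$ by the discrete trace inequality plus \eqref{bound:eu} gives $\vertiii{\eug}_{\partial\mathcal{T}_E}\lesssim(h^{1/2}+h^{s_e}+h^{s_a})h^{k+1}$, which implies \eqref{bound:uhat} (and is even sharper), provided $\tau_E$ is bounded below independently of $h$. The fallback routes you sketch instead are not available in this paper's setting: $\tau_E,\tau_A$ are merely positive functions (taken equal to $1$ in the experiments), so assuming $\tau_E\gtrsim h^{-1}$, or using $\|\tau_E^{-1}\|_\infty\lesssim h$ after writing $\eug-\eu=\tau_E^{-1}(\esi\nor-\esig\nor)$, introduces hypotheses the paper does not make, and the proposed handling of $\esig\nore$ on $\Gamma$ via \eqref{error_Equation1F_h} is not carried out. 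In summary: \eqref{bound:uhat} is within reach of your first argument, but \eqref{bound:vhat} as stated is not proven by your proposal; you need the lifting-plus-local-error-equation argument (or an equivalent device) to recover the extra half power of $h$.
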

\begin{proof}
    By following the argument in the proof of Theorem 4.1 in \cite{CoGoSa2010}, let $K\in\te$ and $\taut_K\in  \pkt{k}{K}$ such that $ \taut\nor=\eug$ on $\partial K$ and $\|\taut_K\|_K\lesssim h_K^{1/2}\|\eug\|_{\partial K}$. According to \eqref{error_h}, taking $\taut=\taut_K$ in $K$ and $\taut=0$ otherwise, we can write
   \begin{align*}
\|\eug\|_{\partial K}^2=(\hookei\esi,\taut)_K+(\eu,\midiv \taut)_K+(\eg,\taut)_K+(\hookei\delsi,\taut)_K+(\delg,\taut)_K.
\end{align*}
By the Cauchy-Schwarz and inverse inequalities, we can obtain
   \begin{align*}
\|\eug\|_{\partial K}^2\lesssim \left(\|\esi\|_{K}+h_K^{-1}\|\eu\|_{K}+\|\eg\|_{K}+\|\delsi\|_{K}+\|\delg\|_{K}\right)\|\taut\|_K,
\end{align*}
which implies that
   \begin{align*}
\|\eug\|_{\partial K}\lesssim h_K^{1/2}\|\esi\|_{K}+h_K^{-1/2}\|\eu\|_{K}+h_K^{1/2}\|\eg\|_{K}+h_K^{1/2}\|\delsi\|_{K}+h_K^{1/2}\|\delg\|_{K},
\end{align*}
because $\|\taut_K\|_K\lesssim h_K^{1/2} \|\eug\|_{\partial K}$. This expression, together with \eqref{bound:eu} and \eqref{mainresult} to bound $\|\eu\|_{K}$ and $\|\esi\|_{K}$, respectively, implies that    \begin{align*}
\left(\sum_{K\in\mathcal{T}_E}h_K\|\eug\|_{\partial K}^2\right)^{1/2}\lesssim\,& h\Theta(\si,\uv,\g,\q,v)+\|\eg\|_{\mathcal{T}_E}.
\end{align*}
The estimate  follows after using \eqref{bound:e_rho_0} and \eqref{bound2:e_rho_c}. 

A similar procedure for $e_{\hat{v}}$ (see also the proof of Theorem 4.1 of \cite{CoGoSa2010}) leads to
  \begin{align*}
h_K^{1/2}\|e_{\hat{v}}\|_{\partial K}\lesssim& h_K\|\eq\|_{K}+\|e_v\|_{K}+h_K\|\delq\|_{K},
\end{align*}
for $K\in \mathcal{T}_A$.
Adding over $K$, 
  \begin{align*}
\left(\sum_{K\in \mathcal{T}_A} h_K\|e_{\hat{v}}\|_{\partial K}^2\right)^{1/2}
\lesssim &
(h+h^{s_e}+h^{s_a})\Theta(\si,\uv,\g,\q,v),
\end{align*}
where we used \eqref{mainresult}, \eqref{bound:e_rho_0} and
\eqref{bound:eu}. The result follows after considering the estimate in Lemma \ref{theta_bound}
 \end{proof}

\section{Numerical Experiments}


%
\subsection{Acoustic problem.}
%
To test our HDG scheme applied to the acoustic problem, we consider equations \eqref{Equation1C}-\eqref{Equation1D} complemented with Dirichlet boundary conditions $v=g_D$ on $\partial \Omega_A$. We take a manufactured acoustic field $v(x,y)=\sin(x)\sin(y)$. The source $f$ and boundary data $g_D$ are set in such a way that $v$ satisfies \eqref{Equation1C}-\eqref{Equation1D} 
in a domain $\Omega_A=(0,1)^2$, with $c=1$ and, for example, $s=2-i$. The stabilization parameter $\tau_A$ is taken to be equal to one everywhere. As it can be inferred from \autoref{thm:error_estimates} and \eqref{bound:uhat} (see also \cite{CoGoSa2010}), the theoretical orders of convergence for this case are $h^{k+1}$ for $v$ and $\bm q$; and $h^{k+2}$ for the numerical trace, since the domain is convex ($s_a=1$).

We consider quasi-uniform refinements of $\Omega_A$ and set $k\in \{1,2,3\}$ in the local spaces. \autoref{fig:acustica} shows the results obtained for this problem, where $N$ is the number of mesh triangles. Note that for the errors in $\q$ and $v$ the optimal theoretical order of convergence $k+1$ was reached. In turn, for the numerical trace we can see an order of superconvergence $k+2$, as expected.

\begin{figure}[htp]
\centering
\begin{tabular}{ccc}
  \normalsize{Degree $k=1$} & \normalsize{Degree $k=2$} & \normalsize{Degree $k=3$} \\
  \includegraphics[width= 0.32\linewidth]{./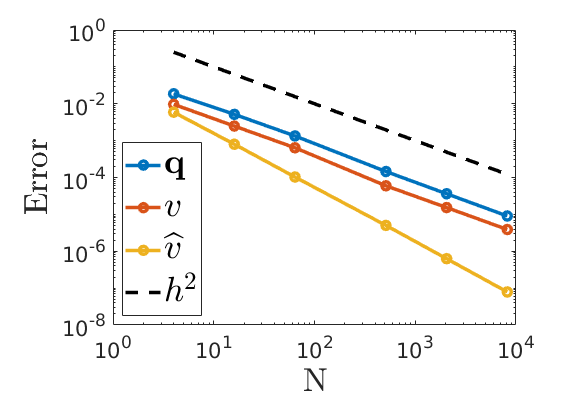} &
  \includegraphics[width= 0.32\linewidth]{./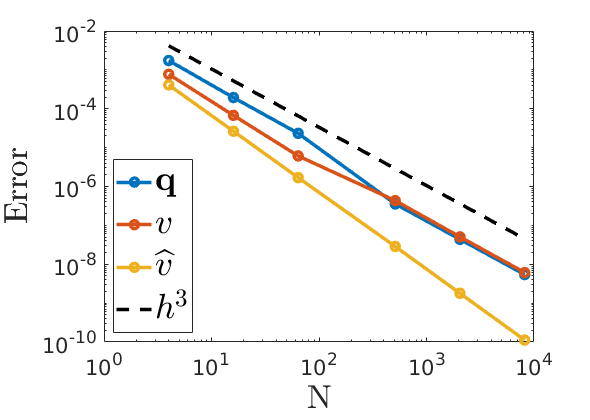} &
 \includegraphics[width= 0.32\linewidth]{./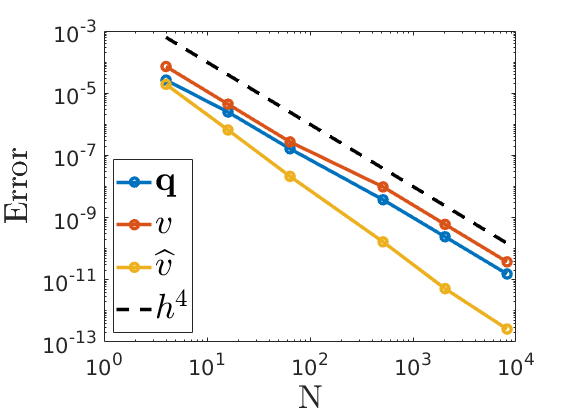}
\end{tabular}
\caption{Discretization error as a function of the number of triangles in the domain for the acoustic problem.}
\label{fig:acustica}
\end{figure}
%
\subsection{Elastic problem.}
%
Analogously to the previous subsection, let us apply the HDG scheme to the equations \eqref{Equation1A}-\eqref{Equation1B} considering $\ome=(0,1)^2, \rho_E=1, s=2-i$ and $\tau_E = 1$ everywhere. The source $\efe$ and the Dirichlet boundary condition are defined such that
\[
\boldsymbol{u}(x,y)=\begin{pmatrix}
    \sin(\pi x)\cos(\pi y)\\
    \cos(\pi x)\sin(\pi y)
\end{pmatrix}, \quad (x,y)\in (0,1)^2,
\]
is the exact solution of the problem.

It is known that the Lamé's first parameter ($\lambda$) and the shear modulus ($\mu$) (or Lamé's second parameter) satisfy the following expressions in terms of the Young's modulus ($E$) and the Poisson's ratio ($\nu$):
\[
\lambda = \dfrac{E\nu}{(1+\nu)(1-2\nu)} \qquad\text{and} \qquad \mu=\dfrac{E}{2(1+\nu)},
\]
so let us take $E=1$ and two values of $\nu$, $0.3$ and $0.49999$ (a nearly incompressible isotropic material deformed elastically at small strains would have a Poisson's ratio of exactly $0.5$).

From \autoref{thm:error_estimates}, we can deduce that the theoretical order of convergence is $h^{k+1}$ for the displacements and the Cauchy stress tensor. Now, the negative powers of $h$ in \eqref{bound:e_rho_c} are due to the term $\ipbe{\eug}{\ten{v}\nor}$ in the proof of Lemma \ref{lema:e_rho}. This term arises when coupling the elasticity and acoustic equations. Since in this example there is no coupling, the term $\ipbe{\eug}{\ten{v}\nor}$ disappears and we can obtain that
\begin{align*}
\|\eg^c\|_{\te} 
\lesssim \Theta(\si,\uv,\g,\q,v). \end{align*}
Therefore, the theory guarantees an order $h^{k+1}$ for the rotation, which agrees with the results in \cite{Ke}. The same reason led the suboptimal estimates in \eqref{bound:uhat}. Since in this example we are considering only the elasticity problem in a convex domain, we have regularity $s_e=1$ and \eqref{bound:uhat} can be improved:
\[
\vertiii{\eug}_{\partial\mathcal{T}_E}\lesssim h^{k+2}.
\]
Moreover, the HDG scheme is also optimal in the nearly incompressible case \cite{CaSo:2023,Ke}.

The numerical results are shown in \autoref{fig:plots_elastica}. Observe that the experimental orders of convergence of the errors in $\si,\uv$ and $\g$, $k+1$, coincide with the theoretical results. In addition, for the numerical trace of $\uv$ we also have a superconvergence of order $k+2$.

 \begin{figure}
 \begin{tabular}{ccc}
  &\normalsize $\boldsymbol{\nu=0.3}$ & \\[1ex]
   Degree $k=1$ & Degree $k=2$ & Degree $k=3$ \\
    \includegraphics[width= 0.315\linewidth]{./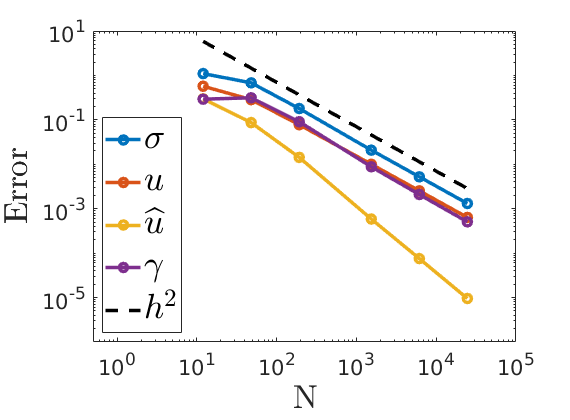} &  \includegraphics[width= 0.315\linewidth]{./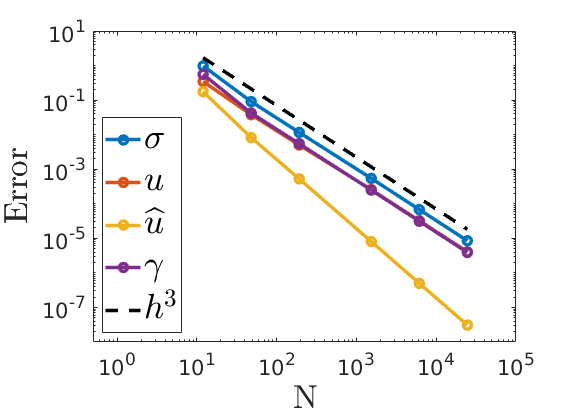} & \includegraphics[width= 0.315\linewidth]{./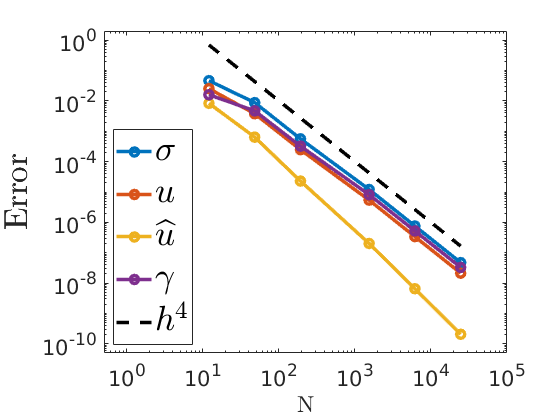} \\
  & \normalsize $\boldsymbol{\nu=0.49999}$ & \\[1ex]
   Degree $k=1$ & Degree $k=2$ & Degree $k=3$ \\
  \includegraphics[width= 0.315\linewidth]{./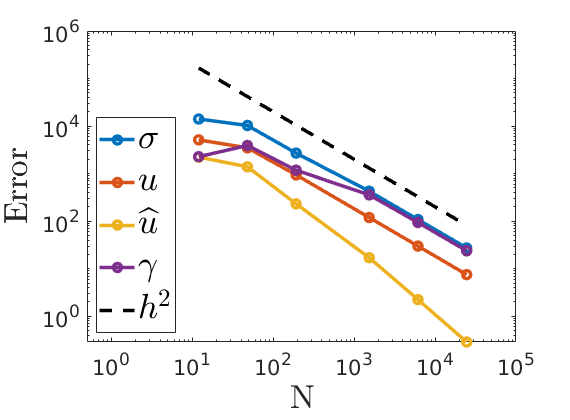} &  \includegraphics[width= 0.315\linewidth]{./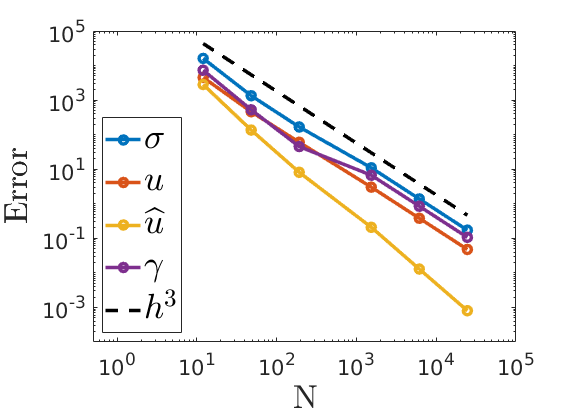} & \includegraphics[width= 0.315\linewidth]{./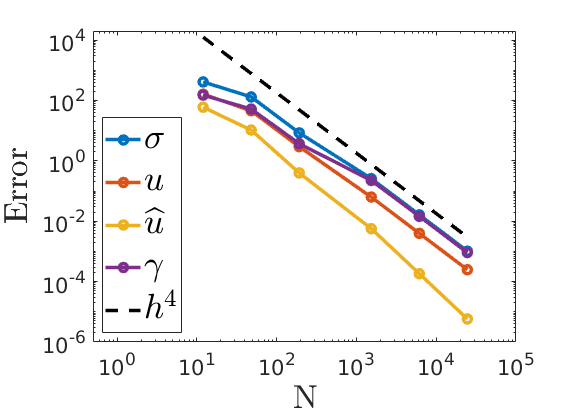}
 \end{tabular}
 \caption{Discretization error as a function of the number of elements in the elastic domain for Poisson's ratio $\nu =0.3$ (first row) and $\nu=0.49999$ (second row).}\label{fig:plots_elastica}
 \end{figure}
%
\subsection{Coupled problem.}
%
We now test our HDG scheme applied to the coupled problem \eqref{Equation1A}-\eqref{Equation1H} with Dirichlet boundary conditions $v=g_D$ on $\gaa$. We take a manufactured acoustic field $v(x,y)=\sin(x)\sin(y)$. The source $f$ and boundary data $g_D$ are set in such a way that $v$ satisfies \eqref{Equation1C}-\eqref{Equation1D} 
in a domain $\Omega_A=(-2,2)^2$, with $c=1$ and $s=2-i$. For the elastic region, we consider $\ome=(-1,1)^2, \rho_E=1$ and $\tau_E = 1$ everywhere. The source $\efe$ is defined such that
\[
\boldsymbol{u}(x,y)=\begin{pmatrix}
    \sin(\pi x)\cos(\pi y)\\
    \cos(\pi x)\sin(\pi y)
\end{pmatrix}, \quad (x,y)\in (-1,1)^2,
\]
satisfies \eqref{Equation1A}-\eqref{Equation1B}. We set the field $\vinc(x,y)=-\sin(x)\sin(y)$ and include additional terms on the right-hand sides of \eqref{Equation1E}-\eqref{Equation1F} so that our manufactured solution satisfies them.

\autoref{fig:plots_coupled} presents the numerical results obtained. The experimental orders of convergence of $\si$,  $\uv$, $\q$ and $v$ coincide with the theoretical results predicted by \autoref{thm:error_estimates}.
Now, for the rotation, \autoref{thm:error_estimates} guarantees an order $h^{k+\min\{1/2,s_e,s_1\}}$, where we recall that $s_e$ and $s_a$ are the regularity indices in \eqref{additional_regularity}. Numerically, we observe a better result and obtain a convergence rate of $h^{k+1}$. Moreover, \autoref{lem:error_traces} predicts $\vertiii{\eug}_{\partial\mathcal{T}_E}\lesssim h^{k+\min\{1/2,s_e,s_a}\}$ and $\vertiii{e_{\hat{v}}}_{\partial\mathcal{T}_A}\lesssim h^{k+1+\min\{1,s_e,s_a\}}$. Computationally superconvergence of order $k+2$ is observed for the numerical traces.

\begin{figure}
\centering
\begin{tabular}{ccc}
 & \normalsize{\textbf{Acoustic variables} } & \\[1ex]
 Degree $k=1$ &  Degree $k=2$ & Degree $k=3$\\[1ex]
\!\!\includegraphics[width= 0.32\linewidth]{./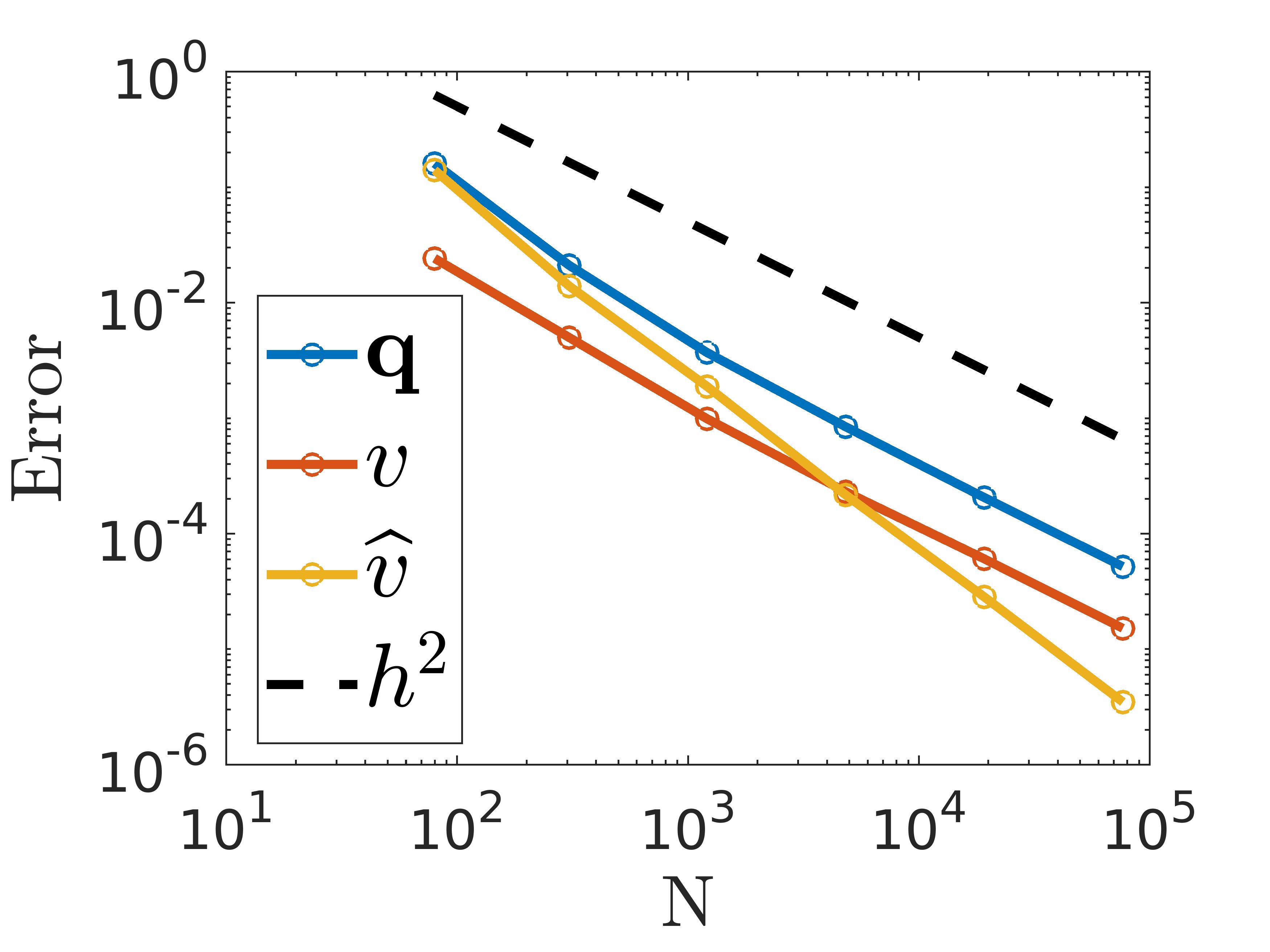} &  
\!\!\includegraphics[width= 0.32\linewidth]{./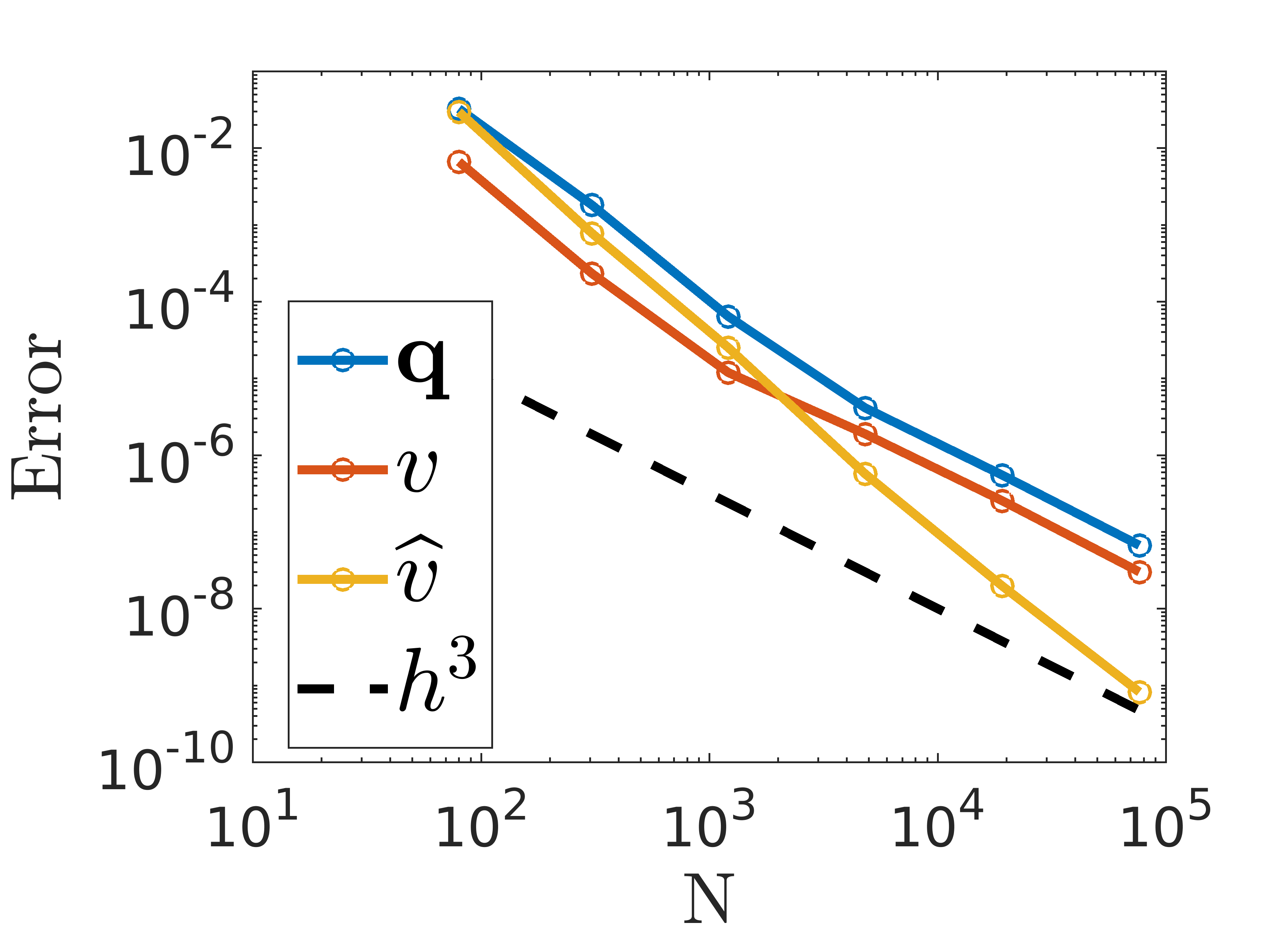} &
\!\!\includegraphics[width= 0.32\linewidth]{./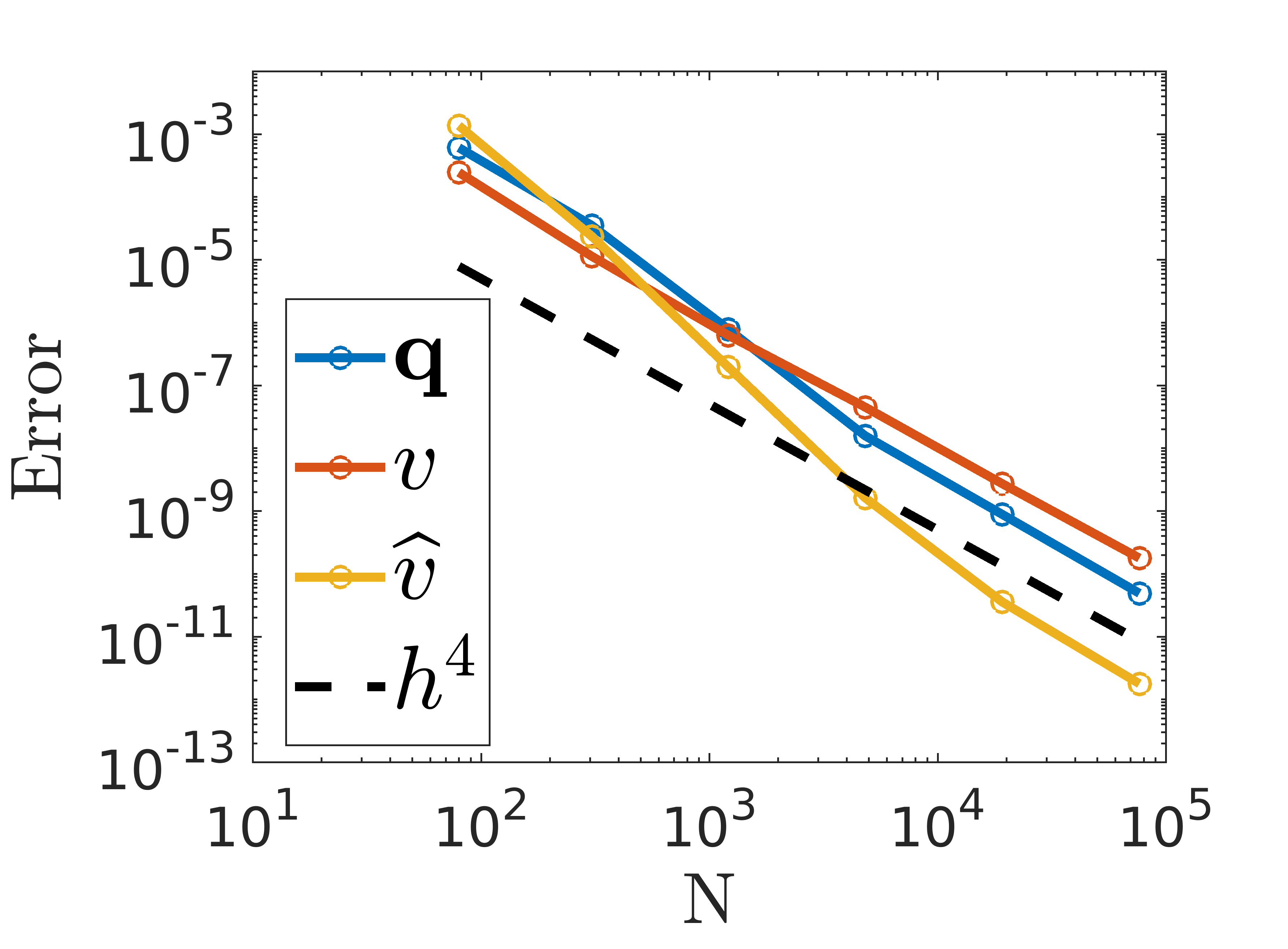}\\[1ex]
 & \normalsize{\textbf{Elastic Variables}} & \\
 Degree $k=1$ &  Degree $k=2$ & Degree $k=3$\\[1ex]
\!\!\includegraphics[width= 0.32\linewidth]{./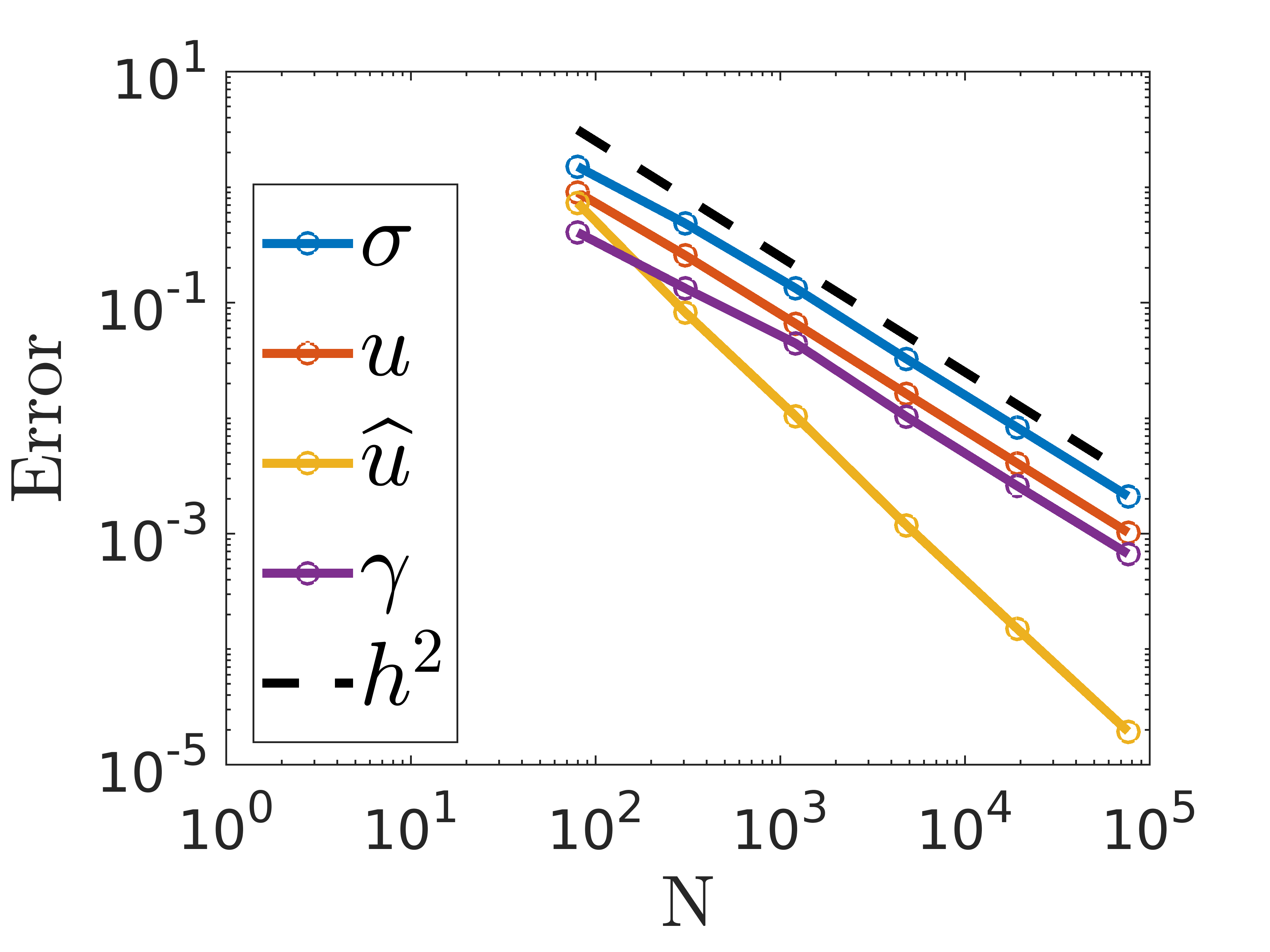} &
\!\!\includegraphics[width= 0.32\linewidth]{./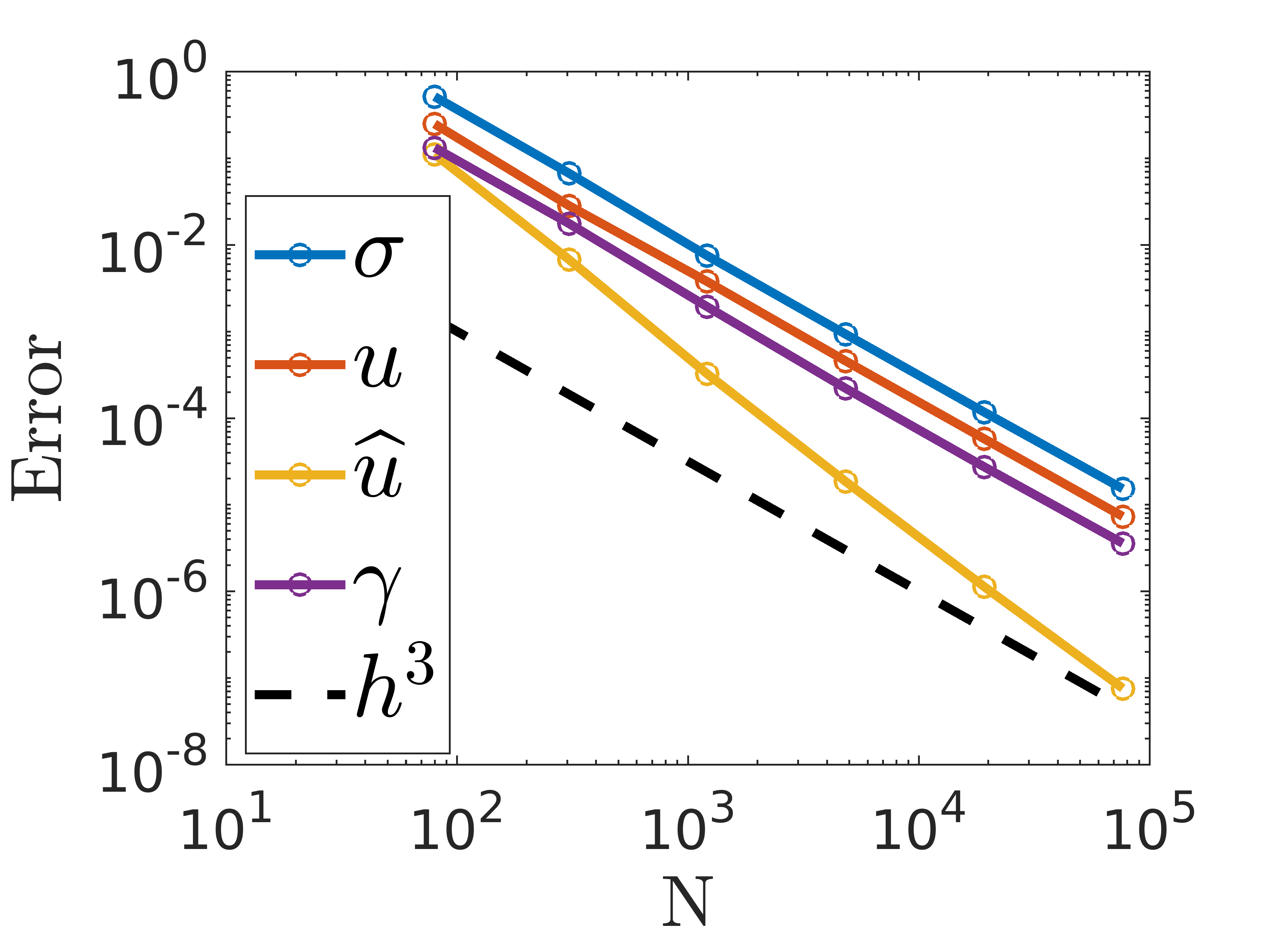} &
\!\!\includegraphics[width= 0.32\linewidth]{./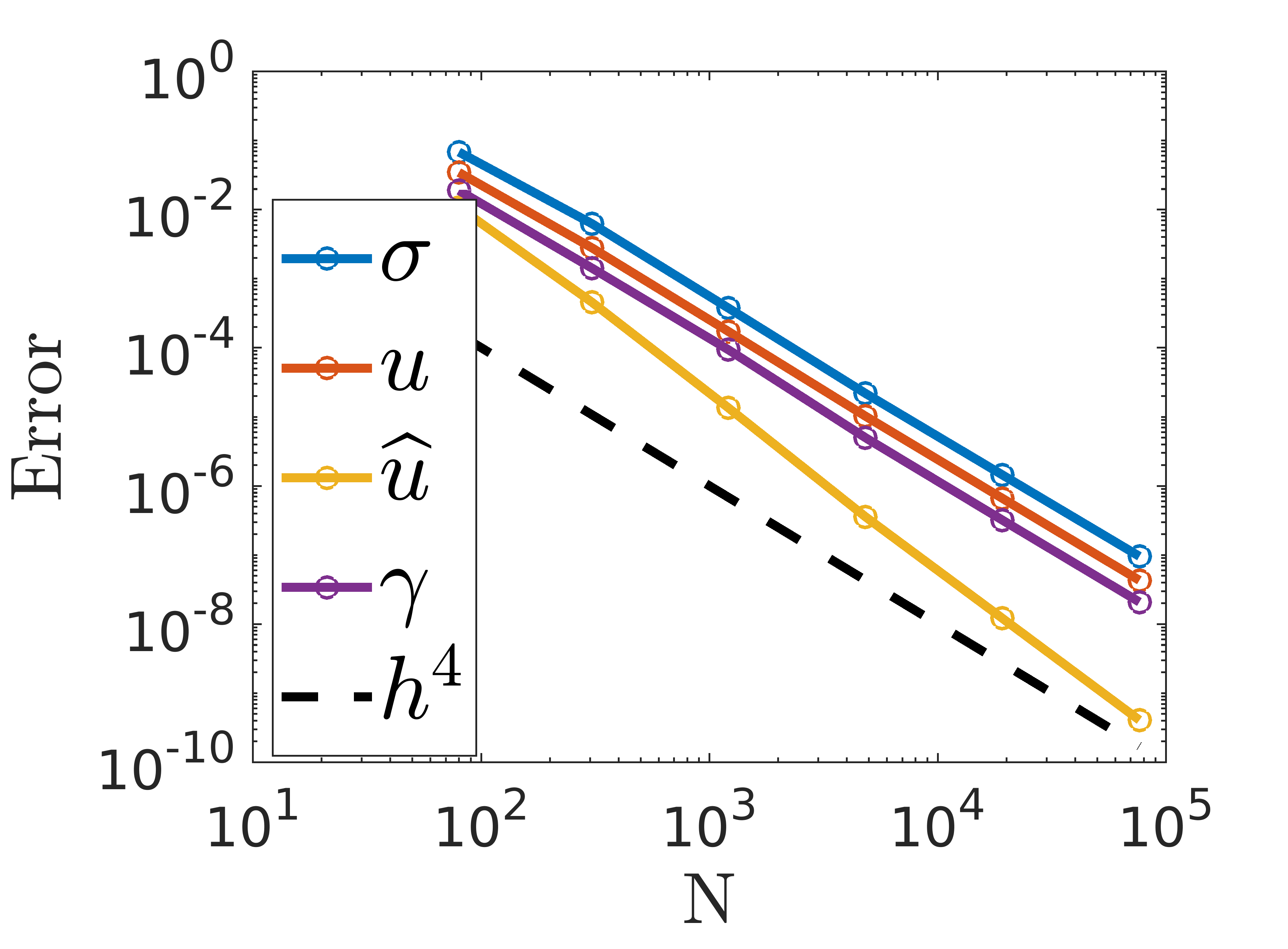}
\end{tabular}
\caption{Discretization error as a function of the number of elements in the domain for the coupled acoustic/elastic problem. Acoustic variables are displayed on the first column and elastic variables on the second column.}
\label{fig:plots_coupled}
\end{figure}

%
\section{Concluding remarks}
The current work presents what---to the authors's best knowledge---is the first analysis and proof of convergence of an HDG discretization for the a Laplace--domain system modeling the interaction between acoustic and elastic waves on a bounded domain. The numerical experiments suggest that convergence rates superior to those theoretically obtained can be expected. The challenge of rigorously establishing such improved rates remains outstanding.

In practical applications, it is often the case that the domain of interest is unbounded. The work presented here is a first step towards a discretization of such physically meaningful cases. In particular, the treatment of the coupling between the scheme analyzed in this communication with a boundary integral formulation for the exterior problem is the subject of ongoing work.

%
\section*{Acknowledgments}
%
Fernando Artaza-Covarrubias was partially funded by ANID-Chile through the grant Fondecyt Regular 1240183.  Tonatiuh S\'anchez-Vizuet was partially funded by the U. S. National Science Foundation through the grant NSF-DMS-2137305. Manuel Solano was partially funded by ANID-Chile through the grants Fondecyt Regular 1240183 and Basal FB210005. 

\bibliographystyle{abbrv}
\bibliography{references}
\end{document}